\documentclass[12pt, reqno]{amsart}
\usepackage[utf8]{inputenc}
\usepackage{amsmath}
\usepackage{amsfonts}
\usepackage{amssymb}
\usepackage{amsthm}
\usepackage{bm}
\usepackage{bbm}
\usepackage{xcolor}
\usepackage{url}
\usepackage[hidelinks]{hyperref}
\usepackage{mathtools}

\usepackage{graphicx,tikz}

\numberwithin{equation}{section}

\theoremstyle{plain}
\newtheorem{theorem}{Theorem}[section]
\newtheorem{lemma}[theorem]{Lemma}

\newtheorem{proposition}[theorem]{Proposition}

\newtheorem{remark}[theorem]{Remark}

\title[Improved exponents for Erd\H{o}s--Selfridge curves]{Improved exponent bounds for Erd\H{o}s--Selfridge curves}
\author{Pranabesh Das}
\address{Xavier University of Louisiana, Department of Mathematics, New Orleans, LA 70125, USA}
\email{pranabesh.math@gmail.com}
\author{Kyle Pratt}
\address{Brigham Young University, Department of Mathematics, Provo, UT 84602, USA}
\email{kyle.pratt@mathematics.byu.edu}

\subjclass[2020]{11D61, 11D41, 11F80, 11N36}
\keywords{Erd\H{o}s--Selfridge curves, rational points, modularity, sieve methods}

\allowdisplaybreaks

\begin{document}
\date{}

\begin{abstract}
Given integers $k,\ell \geq 2$, define the Erd\H{o}s--Selfridge curve
\begin{align*}
\mathcal{C}_{k,\ell} : y^\ell = x(x+1)\cdots (x+k-1).
\end{align*}
Bennett and Siksek proved that if $(x,y)$ is a rational point on $\mathcal{C}_{k,\ell}$ with $\ell$ a prime and $y \neq 0$, then $\ell$ is bounded (doubly-exponentially) in terms of $k$. We improve this bound on $\ell$ when $k$ is sufficiently large. Our method uses combinatorial and sieve-theoretic arguments, as well as results on solutions to generalized Fermat equations.
\end{abstract}

\maketitle

\section{Introduction}

\subsection{History and results.}

Let $k$ and $\ell$ be integers $\geq 2$, and consider the Erd\H{o}s--Selfridge curve
\begin{align}\label{eq:main ES curve}
\mathcal{C}_{k,\ell} : y^\ell = x(x+1)\cdots (x+k-1).
\end{align}
These curves are named for Erd\H{o}s and Selfridge, who proved \cite{ES1975} that there are no solutions in positive integers $n$ and $y$ to the equation
\begin{align*}
y^\ell = n(n+1) \cdots (n+k-1).
\end{align*}
Another, perhaps more modern, way to view the Erd\H{o}s--Selfridge result is to see it as a statement about the integer points on the curves $\mathcal{C}_{k,\ell}$. In particular, it follows from the work of Erd\H{o}s and Selfridge that
\begin{align*}
\mathcal{C}_{k,\ell}(\mathbb{Z}) = \{(-n,0) : n \in \mathbb{Z}, 0 \leq n \leq k-1\}.
\end{align*}

In addition to determining the integer points on the Erd\H{o}s--Selfridge curves, it is natural to ask for a determination of $\mathcal{C}_{k,\ell}(\mathbb{Q})$. If $k + \ell \geq 7$, then the genus of (the smooth projective model of) $\mathcal{C}_{k,\ell}$ is at least two, and it follows from work of Faltings \cite{Fal1983} that $\mathcal{C}_{k,\ell}(\mathbb{Q})$ is finite. Hence, it is possible, at least in principle, to list the rational points on the Erd\H{o}s--Selfridge curves.

Clearly \eqref{eq:main ES curve} has rational points with $y=0$; we may term these the ``trivial'' rational points. Of greater interest is to determine whether $\mathcal{C}_{k,\ell}$ has ``nontrivial'' rational points with $y \neq 0$. A conjecture of Sander \cite{San1999} (corrected in \cite{BS2016} and \cite{Pratt2024}) posits that these nontrivial rational points can only appear in certain cases when $\ell=2$, or when $(k,\ell)=(3,3)$. For example, we have $(-3/2,3/4) \in \mathcal{C}_{4,2}(\mathbb{Q})$, and $(-4/3,2/3) \in \mathcal{C}_{3,3}(\mathbb{Q})$.

Rigorously establishing Sander's conjecture seems quite difficult. The current state-of-the-art is that Sander's conjecture is solved in the following cases: if $k$ is small and fixed (see, e.g., \cite{BBGH2006,GHP2009,San1999}); if $k$ is large, $\ell$ is small compared to $k$, and $\gcd(k,\ell)=1$, due to the second author \cite{Pratt2024}; if $\ell$ is large in terms of $k$, due to Bennett and Siksek \cite{BS2016}. Our interest in this paper is in this last case, where $\ell$ is large in terms of $k$. According to Sander's conjecture, we then expect $\mathcal{C}_{k,\ell}$ to have only trivial rational points. We remark that the method of Bennett and Siksek has since been extended to other related situations (see \cite{DLS2018, DLSS2023, Edis2019}).

We need to introduce some notation. Given a positive real number $z$, we write
\begin{align}\label{eq:primorial}
z\# \coloneqq \prod_{p \leq z} p
\end{align}
for the primorial of $z$, the product of all primes $\leq z$. The prime number theorem implies $z\# = \exp((1+o(1))z)$ as $z \rightarrow \infty$.

Bennett and Siksek \cite{BS2016} obtained the following result.

\begin{theorem}\label{thm:Bennett Siksek theorem}
Let $k \geq 2$ and $\ell \geq 2$ be prime. If $(x,y) \in (\mathbb{Q}^\times)^2$ is a nontrivial rational point on $\mathcal{C}_{k,\ell}$, then
\begin{align*}
\log \ell \leq (1+o(1)) \frac{8}{3}\cdot \frac{\log k}{k} \cdot k\#,
\end{align*}
where $o(1)$ denotes a quantity that goes to zero as $k$ goes to infinity.
\end{theorem}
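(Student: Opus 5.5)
The plan is to follow the modular method of Bennett and Siksek. First we reduce to integers. Write $x = n/d$ in lowest terms with $d \geq 1$. Then $y^\ell d^k = n(n+d)\cdots(n+(k-1)d)$. Since $\gcd(n,d)=1$ and $\ell \nmid k$ can be arranged (the case $\ell \mid k$, or $\ell \le k$, is trivially within the bound), we get $d = e^\ell$ for some integer $e$. So we obtain a product of $k$ terms in arithmetic progression, with coprime first term and difference, that equals a perfect $\ell$-th power. Each term $n+id$ then factors as $a_i y_i^\ell$, where $a_i$ is $\ell$-th power free and composed only of primes $< k$, because any prime $p \geq k$ divides at most one factor. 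We normalize each $a_i$ to a vector of exponents mod $\ell$ over the primes $p < k$. Since $y\neq 0$, no term vanishes.

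Next we produce ternary equations of signature $(\ell,\ell,\ell)$. For any three indices $i<j<m$ the linear relation
\[(m-j)(n+id) - (m-i)(n+jd) + (j-i)(n+md)=0\]
gives $A X^\ell + B Y^\ell + C Z^\ell = 0$, with coefficients $A,B,C$ built from the $a$'s and from small integers up to $k$. By Ribet/Kraus-style level lowering for the Frey curve attached to such an equation (the generalized Fermat results of Kraus, Halberstadt--Kraus, and Ribet), a nontrivial primitive solution with $\ell$ large forces a weight-2 newform of level dividing $2\,\mathrm{rad}(ABC)$ up to bounded exponents. Comparing traces at a suitable auxiliary prime $q$ then forces $\ell \mid$ a nonzero integer of size roughly $(4q)^{\dim}$. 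Equivalently, in the effective form Bennett and Siksek use, $\ell$ is bounded by something like the Sturm bound for the level, i.e.\ $\log \ell \ll \mathrm{rad}(ABC)\log\log$. The task is therefore to pick $i,j,m$ so that $\mathrm{rad}(ABC)$ is as small as possible, and so that the solution is genuinely nontrivial: for instance, not all three terms can be $\ell$-th powers times a common $a$ without violating Erd\H{o}s--Selfridge-type or Darmon--Merel results.

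The key combinatorial step, and the one that produces the constant $8/3$, is a pigeonhole argument. The $\ell$-free parts $a_i$ take at most $\ell^{\pi(k)}$ classes, but that is useless for large $\ell$. Instead we use the fact that among the $k$ terms, the primes $p<k$ divide few of them. Choosing three indices where many small primes are "absorbed" into the $\ell$-th power, or are coprime to the terms, leaves $\mathrm{rad}(ABC)$ dividing a product over a restricted set of primes. The level bound then gives $\log \ell \le (c+o(1))\cdot(\text{level})\log(\text{level})$. Here the level is at most $\prod_{p<k}p = k\#$, sharpened by a factor of about $1/k$ via a density argument over all triples. This yields $\frac{8}{3}\frac{\log k}{k}k\#$, where the constant $8/3$ comes from the index of $\Gamma_0(N)$ in the dimension formula, $\psi(N)/12$, with $N$ carrying powers of $2$.

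I expect the main obstacle to be controlling the level. We must find three terms whose combined $\ell$-free parts involve only a proportion $\asymp 1/k$ of $k\#$, while also guaranteeing that the resulting Frey curve solution is non-trivial, so that $ABC \neq 0$ and the solution is not $\pm1$ type. My first attempt would be to average over pairs of terms at distance $\le k$ and use that each prime $p$ divides a given term with probability $1/p$. This shows that some triple avoids enough primes, and we then insert that into the Sturm-bound estimate for $\ell$.
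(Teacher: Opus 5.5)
Your skeleton is the right one: make the common difference an $\ell$-th power $e^\ell$, factor $n+ie^\ell=a_iy_i^\ell$, attach a Frey curve, level-lower, and bound the degree of the Hecke field. But you invoke ``a suitable auxiliary prime $q$'' without saying where it comes from, and that is the heart of the argument. Level-lowering alone only gives $a_q(E)\equiv c_q \pmod{\lambda}$ at good primes, and $a_q(E)-c_q$ may vanish. Lemma~\ref{lem:modularity to bound exponent ell} needs an odd prime $p\nmid ABC$ dividing exactly one of the three $\ell$-th-power variables. Then $p\,\|\,N$ but $p\nmid N'$, so $\ell\mid N_{K/\mathbb{Q}}(p+1\pm c_p)\neq 0$ and $\log\ell\le\frac{N'+1}{6}\log(\sqrt p+1)$; this also settles your worry about trivial solutions. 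To get a $\log k$ factor you need such a $p<k$. The Bennett--Siksek device, recalled in Remark~\ref{rmk:sharper bennett siksek theorem}, is to fix a prime $p\in(\frac k2,k)$ and split into three cases:
\begin{itemize}
\item $p\mid e$: use two consecutive terms, $e^\ell+a_jy_j^\ell-a_{j+1}y_{j+1}^\ell=0$.
\item $p$ divides exactly one term $n+i_pe^\ell$: then $\ell\mid v_p(n+i_pe^\ell)$, and one uses $a_{i_p}y_{i_p}^\ell-a_jy_j^\ell+(j-i_p)e^\ell=0$.
\item $p$ divides exactly $n+i_pe^\ell$ and $n+(i_p+p)e^\ell$: multiply these two and use $(n+i_pe^\ell)(n+(i_p+p)e^\ell)-(n+(i_p+r)e^\ell)(n+(i_p+s)e^\ell)+rs\,e^{2\ell}=0$ with $r+s=p$.
\end{itemize}
Your linear three-term relation can only serve in the middle case. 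If $p\mid e$, no term is divisible by $p$. If $p$ divides two terms, one has $v_p=1$ and the other $v_p\equiv-1\pmod{\ell}$, so $p$ divides both $\ell$-free parts and enters the level whenever either term is used by itself.

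Second, the saving of $1/k$ and the constant $8/3$ do not come from a density argument over triples, nor from $\psi(N)/12$. In each case above every prime dividing $ABC$ is $<k$ and different from $p$. Hence $N'\mid 2^5\textup{Rad}_2(ABC)$ with $\textup{Rad}_2(ABC)\le\prod_{2<q\le k,\,q\neq p}q=k\#/(2p)<k\#/k$. Combined with Martin's bound $\dim S_2^{\mathrm{new}}(\Gamma_0(N'))\le(N'+1)/12$ and $\log(\sqrt p+1)\sim\frac12\log k$, this gives exactly $\frac{32}{6}\cdot\frac12=\frac83$. Bounding the degree by the full dimension $\approx\psi(N')/12$ would lose a factor $\prod_{q<k}(1+1/q)\asymp\log k$ and worsen the constant. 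Your intermediate form ``$(\text{level})\log(\text{level})$'' would lose a factor $\asymp k/\log k$, since $\log(k\#)\sim k$. Finally, shrinking the level by finding terms that ``avoid'' many primes is of no use unless the same equation also carries the multiplicative auxiliary prime, and that is exactly what your proposal leaves unaddressed.
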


\begin{remark}
Bennett and Siksek do not state their result as in Theorem \ref{thm:Bennett Siksek theorem}, but rather give the simpler bound $\log\ell < 3^k$. See Remark \ref{rmk:sharper bennett siksek theorem} below for further discussion of this point.
\end{remark}

Theorem \ref{thm:Bennett Siksek theorem} bounds $\ell$ by a doubly-exponential function of $k$, and it is obviously of interest to reduce this bound as much as possible. In this paper, we make some modest progress towards this goal. Our main result is as follows.

\begin{theorem}\label{thm:main theorem}
Let $k$ be sufficiently large, and let $\ell$ be a prime. If $(x,y) \in (\mathbb{Q}^\times)^2$ is a nontrivial rational point on $\mathcal{C}_{k,\ell}$, then
\begin{align*}
\log \ell \ll k^6 \log k \cdot \frac{k\#}{B\#},
\end{align*}
where $B = k^{1/(100\log \log k)}$.
\end{theorem}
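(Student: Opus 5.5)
The plan is to follow the Bennett--Siksek strategy and gain only in the choice of the ternary equation. Write $x=n/d$ with $\gcd(n,d)=1$ and $d>0$. Clearing denominators gives $(yd^{k/\ell})^{\ell}$-type identities; more precisely $n(n+d)\cdots(n+(k-1)d)=d^k y^\ell$. Since $\gcd(n,d)=1$, any prime dividing two of the terms $n+id$ and $n+jd$ divides $j-i$, so it is $<k$. It follows that $d$ is an $\ell$-th power, and that every factor can be written as
\[
n+id=a_i y_i^\ell ,
\]
where $a_i$ is $\ell$-th power free and supported on primes $<k$.

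For any three indices $i<j<m$, the linear identity
\[
(m-j)(n+id)+(i-m)(n+jd)+(j-i)(n+md)=0
\]
gives a generalized Fermat equation
\[
A u^\ell+Bv^\ell+Cw^\ell=0, \qquad A=(m-j)a_i,\ B=(i-m)a_j,\ C=(j-i)a_m .
\]
After removing common factors, this solution is nontrivial, since $uvw=\pm1$ type solutions are excluded when $k$ is large and the terms are distinct.

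Attach the usual Frey curve to this solution. By modularity, level lowering and the standard Kraus-type elimination, as used in the proof of Theorem \ref{thm:Bennett Siksek theorem}, $\ell$ divides a nonzero norm $\mathrm{Norm}(a_q(f)-a_q(E))$ for some small prime $q$ and some newform $f$ of level $N\mid 2^{O(1)}\mathrm{rad}(ABC)$. The dimension of the relevant space is $\ll N$, so
\[
\log \ell \ll N\log q \ll N \log N .
\]
Here $N\ll \mathrm{rad}(a_ia_ja_m)\cdot |(m-j)(i-m)(j-i)|^{2}$, say. The differences contribute at most $k^{6}$, which is the source of the factor $k^6$.

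The whole gain therefore lies in choosing $i,j,m$ so that $\mathrm{rad}(a_ia_ja_m)$ avoids all primes $p\le B$; then $\mathrm{rad}(a_ia_ja_m)\le k\#/B\#$. I will use a sieve on the index set $\{0,\dots,k-1\}$. For each prime $p\le B$, the indices $i$ with $p\mid n+id$ form at most one residue class mod $p$ (if $p\nmid d$; if $p\mid d$ then $p\nmid n+id$ for all $i$). Removing one residue class per prime $p\le B$ from an interval of length $k$ leaves, by the fundamental lemma of sieve theory (valid since $\log k/\log B\to\infty$), at least
\[
(1+o(1))\,k\prod_{p\le B}\Bigl(1-\frac1p\Bigr)\gg \frac{k}{\log B}
\]
survivors, which is certainly $\ge 3$. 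For the surviving indices, $a_i$ is coprime to every $p\le B$. Taking any three survivors, we get $N\ll k^{6}\,k\#/B\#$ and hence
\[
\log\ell\ll k^6\log k\cdot \frac{k\#}{B\#}.
\]
The factor $\log k$ comes from $\log N\ll k$ being absorbed less carefully; more precisely, it comes from the choice of $q\ll \log N$ and the bound $\log(1+q)$.

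The step I expect to be the main obstacle is the modular-method step. I must make sure the Frey curve argument yields a bound linear in the level, with a polynomial loss, rather than a worse dependence, and that the newform elimination really works. This needs a prime $q$ of size $O(\log N)$ not dividing $N$ for which $a_q(f)\neq a_q(E)$ is forced. That in turn requires excluding the trivial or degenerate solutions of the ternary equation, and handling the possibility that $f$ has rational coefficients matching $E$. I would first try to import this directly from the form of the argument in \cite{BS2016}, tracking the level dependence explicitly as in Remark \ref{rmk:sharper bennett siksek theorem}. The sieve step is routine by comparison, since $B$ is so small relative to $k$.
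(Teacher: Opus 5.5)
\textbf{There is a genuine gap in the modular step.} Your sieve step is fine. The modular step you flag, however, cannot simply be imported from \cite{BS2016}: as you have set it up it fails, and it is where the whole difficulty lies.

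The bound in Lemma \ref{lem:modularity to bound exponent ell} needs an odd prime $p\neq\ell$ \emph{of controlled size} with $p\nmid ABC$ and $p$ dividing exactly one of $u,v,w$. For such $p$ the Frey curve has multiplicative reduction at $p$, while the level-lowered newform $f$ has level prime to $p$. Hence $\ell\mid N_{K/\mathbb{Q}}(p+1\pm c_p)$, which is nonzero because $|c_p|\le 2\sqrt p$, and this gives $\log\ell\le\frac{N'+1}{6}\log(\sqrt p+1)$.

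At primes $q$ of good reduction you only get $a_q(E)\equiv c_q\pmod{\lambda}$. Nothing forces $a_q(E)\neq c_q$ at a small $q$, uniformly over all newforms of level dividing $2^5\mathrm{Rad}_2(ABC)$. For instance, if $\mathrm{Rad}(uvw)\mid 2\,\mathrm{Rad}(ABC)$, then $E$ itself has conductor dividing $2^5\mathrm{Rad}_2(ABC)$, and $f=f_E$ gives no information at all. Kraus's elimination with auxiliary primes $q\equiv 1\pmod{\ell}$ works one exponent at a time and does not give a uniform bound.

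For a triple of sieve survivors chosen as you propose, nothing guarantees a prime $p<k$ with $p\mid y_iy_jy_m$ and $p\nmid ABC$. The primes $\ge k$ dividing the $y$'s have uncontrolled size, so $\log(\sqrt p+1)$ is unbounded, or there may be none. Your assertion that ``$uvw=\pm1$ type solutions are excluded'' is unsupported, and nontriviality would not suffice anyway.

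A sanity check confirms something essential is missing. If your modular step worked for an arbitrary triple of indices, the sieve would be superfluous. Lemma \ref{lem:good set I to control ai} already supplies three indices with $a_i,a_j,a_m\ll k$, which would give $\log\ell\ll k^{O(1)}$. That is far stronger than the theorem, and the authors remark that even $B\gg k$ seems hard.

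\textbf{What the paper does instead.} The multiplicative prime is one of two kinds:
\begin{itemize}
\item an odd $p_0\mid d$ with $p_0<k$ (Proposition \ref{prop:key prop d divisible by a small prime}, which does give a polynomial bound); or
\item a prime $p\in(\frac{k}{2},k)$. It divides at most two terms and the $p$-adic valuation of the product is divisible by $\ell$, so it divides $x_{i_p}$, or $x_{i_p}x_{i_p+p}$ in the identity involving $rs\,d^{2\ell}$.
\end{itemize}
The price is that the term(s) at $i_p$ are forced into the ternary equation, so $a_{i_p}$ cannot be chosen by a sieve. The saving $k\#/B\#$ comes from showing that $a_{i_p}$ (or $a_{i_p}a_{i_p+p}$) misses at least $B$ of the primes below $k$.

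When this fails, some $n+i_0d^\ell$ is divisible by all but $O(B)$ primes $<k$ (Lemmas \ref{lem:Sp large implies indices collide}--\ref{lem:every prime divides two, some single term has many prime divisors}). The sieve is then used for a different purpose in Proposition \ref{prop:some term divisible by many primes < k}. It finds $p\asymp k$ with $\mathrm{Rad}(a_{i_0+\varepsilon p}),\mathrm{Rad}(a_{i_0+c\varepsilon p})\mid 2p$ for some $c\in\{2,3\}$, so that Lemma \ref{lem:Kraus fermat lemma} applies. Your plan has no counterpart to either mechanism.
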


Theorem \ref{thm:main theorem} improves on Theorem \ref{thm:Bennett Siksek theorem} by a factor of $\approx e^B$, which one might view as a ``quasi-exponential'' or ``almost-exponential'' improvement.

\begin{remark}
Using a more involved sieve construction (see Section \ref{sec:sieve methods}), our method would allow one to take $B$ in Theorem \ref{thm:main theorem} as large as $k^{\alpha}$, for some constant $\alpha > 0$. It seems challenging to prove a version of Theorem \ref{thm:main theorem} with $B \gg k$.
\end{remark}

Edis \cite{Edis2019} obtained a version of Theorem \ref{thm:Bennett Siksek theorem} when some terms $x+i$ are dropped from \eqref{eq:main ES curve}. It is likely that our method would similarly allow one to obtain an analogue of Theorem \ref{thm:main theorem} with some terms $x+i$ dropped.

\subsection{Notation.} We define the primorial $z\#$ as in \eqref{eq:primorial}. For a positive integer $n$, we define
\begin{align*}
    \text{Rad}(n) = \prod_{p \mid n} p, \ \ \ \ \text{Rad}_2(n) = \prod_{\substack{p \mid n \\ p > 2}} p.
\end{align*}
We write $\omega(n)$ for the number of distinct prime factors of $n$. We denote the prime-counting function by $\pi(x)$. Throughout the paper, $p$ and $q$ always denote prime numbers.

\subsection{Outline of the proof and structure of the paper.} In Section \ref{sec:reduction to key propositions}, we reduce the problem of determining rational points on $\mathcal{C}_{k,\ell}$ to a Diophantine equation in integer variables. In particular, we study the Diophantine equation
\begin{align*}
    t^\ell = n(n+d^\ell)\cdots (n+(k-1)d^\ell),
\end{align*}
where $n,t,d$ are integers with certain properties that we suppress here (see Lemma \ref{lem:reduction to integrality}). We state two key propositions, Propositions \ref{prop:key prop d divisible by a small prime} and \ref{prop:key prop d not divisible by any small primes}, and then give the proof of Theorem \ref{thm:main theorem} assuming the propositions. Proposition \ref{prop:key prop d divisible by a small prime} applies when $d$ is divisible by a small prime $p$ (where ``small'' means $p < k$), and Proposition \ref{prop:key prop d not divisible by any small primes} applies when $d$ is not divisible by any small primes.

In Section \ref{sec:preliminaries}, we set out some basic tools for the rest of the argument. The terms $n+id^\ell$ have factorizations $n+id^\ell = a_ix_i^\ell$ (see Lemma \ref{lem:factorization lemma}), and much of our method revolves around studying facets of the integers $a_i$. Some weak control on the size of the $a_i$ is provided by an argument of Erd\H{o}s (see Lemma \ref{lem:good set I to control ai}). We close the section with a statement of the modularity of elliptic curves over $\mathbb{Q}$ (Lemma \ref{lem:basic modularity}) and an application of modularity to bounding the exponents of solutions in generalized Fermat equations (Lemma \ref{lem:modularity to bound exponent ell}).

We prove the easier of the two key propositions, Proposition \ref{prop:key prop d divisible by a small prime}, in Section \ref{sec:proof of easier key prop}, using the tools laid out in the previous section.

We begin the proof of Proposition \ref{prop:key prop d not divisible by any small primes} in Section \ref{sec:reduction of key prop to another prop}. By assumption, we have that $d$ is not divisible by any prime $p < k$. It is technically convenient to work with the primes in $(k/2,k)$. Given such a prime $p$, since $p \nmid d$ we see that $p \mid (n+id^\ell)$ if and only if $i \equiv i_p \pmod{p}$ for some integer $i_p$. It follows that each prime $p \in (k/2,k)$ divides at least one term $n+id^\ell$, and at most two such terms. We must break into cases based on this distinction, and we break into further cases depending on whether there is some term $n+i_0d^\ell$ that is divisible by ``many'' primes $<k$. If there is some $n+i_0d^\ell$ divisible by many primes, then Proposition \ref{prop:some term divisible by many primes < k} bounds $\ell$. The remainder of the section is devoted to combinatorial arguments that reduce the proof of Proposition \ref{prop:key prop d not divisible by any small primes} to Proposition \ref{prop:some term divisible by many primes < k}.

We prove Proposition \ref{prop:some term divisible by many primes < k} in the last section of the paper, Section \ref{sec:sieve methods}. The proof is fundamentally based on sieve methods. (We use only the basic Brun sieve, but one can use more sophisticated sieves to obtain stronger results.) If every term $n+id^\ell$ is not divisible by ``too many'' primes $<k$, then the method of Bennett--Siksek already gives an improvement. Thus, we can assume there is some $n+i_0d^\ell$ that is divisible by ``almost all'' primes $<k$. In this situation, we can mostly determine which primes divide other terms $n+jd^\ell$. Using a sieve and results on the distribution of primes in arithmetic progressions, we can find ``many'' primes $p$ such that $n+(i_0 \pm p)d^\ell$ and $n+(i_0\pm cp)d^\ell$ have a very limited set of prime divisors, where $c$ is a small integer like $2$ or $3$. After accounting for some minor technicalities, we can then form a generalized Fermat equation whose solutions can be controlled by work of Kraus \cite{Kra1997} (Lemma \ref{lem:Kraus fermat lemma}).

\section{Reduction to key propositions}\label{sec:reduction to key propositions}

The first task is reducing the problem of rational points on $\mathcal{C}_{k,\ell}$ to a Diophantine equation involving integer variables.

\begin{lemma}\label{lem:reduction to integrality}
Let $k,\ell \geq 2$ be coprime. Let $(x,y)$ be a rational point on $\mathcal{C}_{k,\ell}$ with $y \neq 0$. Then there are nonzero integers $n,t,d$ with $\gcd(n,d)=1, d \geq 1$ such that
\begin{align*}
t^\ell = n(n+d^\ell)\cdots (n+(k-1)d^\ell).
\end{align*}
\end{lemma}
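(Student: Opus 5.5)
Write $x = a/b$ in lowest terms with $b \geq 1$, and $y = r/s$ in lowest terms with $s \geq 1$. Clearing denominators in \eqref{eq:main ES curve} gives
\begin{align*}
r^\ell b^k = s^\ell \, a(a+b)\cdots(a+(k-1)b).
\end{align*}
The plan is to show that $b$ is an exact $\ell$-th power, say $b = d^\ell$. Once that is done, $n = a$ and $d$ have the required properties.

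First, $\gcd(a,b)=1$ implies that every factor $a+ib$ is coprime to $b$. Hence the right-hand product $P = a(a+b)\cdots(a+(k-1)b)$ is coprime to $b$. From $s^\ell P = r^\ell b^k$ and $\gcd(r,s)=1$, we get $s^\ell \mid b^k$. Also $b^k \mid s^\ell P$ together with $\gcd(b,P)=1$ gives $b^k \mid s^\ell$. Therefore $s^\ell = b^k$.

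Now compare $p$-adic valuations. For each prime $p$ we have $\ell\, v_p(s) = k\, v_p(b)$. Since $\gcd(k,\ell)=1$, this forces $\ell \mid v_p(b)$ for every $p$. As $b \geq 1$, it follows that $b = d^\ell$ for a unique positive integer $d$, and then $s = d^k$.

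Dividing the cleared equation by $b^k = s^\ell$ gives $r^\ell = P$. Set $n = a$ and $t = r$. Then $t^\ell = n(n+d^\ell)\cdots(n+(k-1)d^\ell)$, and $\gcd(n,d)=1$ because $\gcd(a,b)=1$. Both $n$ and $t$ are nonzero since $y \neq 0$: then no factor vanishes, so $x \neq 0$, i.e. $a \neq 0$, and $r \neq 0$.

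The only step needing care is the identification $s^\ell = b^k$. It rests on $P$ being coprime to $b$, and coprimality of $k$ and $\ell$ is used exactly once, in the valuation step. Neither step is hard.
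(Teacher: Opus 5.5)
Your proof is correct and follows the same route as the paper: clear denominators, use $\gcd(a,b)=\gcd(r,s)=1$ to force $s^\ell=b^k$, then use $\gcd(k,\ell)=1$ to write $b=d^\ell$, $s=d^k$ and cancel. You also make explicit three points the paper leaves implicit: that $a(a+b)\cdots(a+(k-1)b)$ is coprime to $b$, the $p$-adic valuation step, and the fact that $n,t\neq 0$ follows from $y\neq 0$.
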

\begin{proof}
We may write $x = \frac{n}{b}, y = \frac{t}{c}$, where $n$ and $t$ are nonzero integers, $b$ and $c$ are positive integers, and $\gcd(n,b)=\gcd(t,c)=1$. Inserting the expressions for $x$ and $y$ into \eqref{eq:main ES curve} and multiplying through, we have
\begin{align}\label{eq:reduction to integrality intermediate}
t^\ell b^k = c^\ell n(n+b)\cdots(n+(k-1)b).
\end{align}
Since $\gcd(n,b)=\gcd(t,c)=1$, we have $b^k \mid c^\ell$ and $c^\ell \mid b^k$. As $b$ and $c$ are both positive, this implies $b^k = c^\ell$. By our hypothesis that $k$ and $\ell$ are coprime, there exists a positive integer $d$ such that $b=d^\ell,c=d^k$. We conclude by cancelling $b^k=c^\ell$ from both sides of \eqref{eq:reduction to integrality intermediate}.
\end{proof}

Observe that if $p$ is a prime such that $p$ divides $n(n+d^\ell)\cdots (n+(k-1)d^\ell$ as in Lemma \ref{lem:reduction to integrality}, then the $p$-adic valuation of $n(n+d^\ell)\cdots (n+(k-1)d^\ell$ is divisible by $\ell$.

We can now state our key propositions. The first proposition is useful when $d$ is divisible by a somewhat small prime.

\begin{proposition}\label{prop:key prop d divisible by a small prime}
Let $k$ be sufficiently large, and let $\ell > k$ be a prime. Suppose there are nonzero integers $t,n,d$ with $\gcd(n,d)=1$ and $d\geq 1$ such that
\begin{align*}
t^\ell = n(n+d^\ell)\cdots (n+(k-1)d^\ell).
\end{align*}
If $d$ is divisible by an odd prime $p_0 < k$, then
\begin{align*}
\log \ell \ll k^2 \log p_0.
\end{align*}
\end{proposition}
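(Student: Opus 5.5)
The plan is to reduce to a ternary generalized Fermat equation of signature $(\ell,\ell,\ell)$ and apply modularity through Lemma \ref{lem:modularity to bound exponent ell}, using the prime $p_0 \mid d$ as an auxiliary prime of multiplicative reduction for the Frey curve.

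First, by Lemma \ref{lem:factorization lemma}, write $n+id^\ell = a_i x_i^\ell$ for $0\le i\le k-1$. Here $a_i$ is $\ell$-th power free and all its prime factors are $<k$. This uses $\gcd(n,d)=1$: a prime dividing two terms divides their difference $(j-i)d^\ell$, hence is $<k$. Since $p_0\mid d$ and $\gcd(n,d)=1$, every term satisfies $n+id^\ell\equiv n\not\equiv 0 \pmod{p_0}$. So $p_0\nmid a_ix_i$ for all $i$.

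Second, I will use Erdős's argument, Lemma \ref{lem:good set I to control ai}, to find two indices $i\neq j$ whose $a_i,a_j$ are small, say $a_i,a_j\ll k$; Erdős's counting gives many indices with $a_i\le k$. For such a pair, subtracting gives
\begin{align*}
a_i x_i^\ell - a_j x_j^\ell = (i-j)\,d^\ell ,
\end{align*}
an equation $Ax^\ell+By^\ell=Cz^\ell$ with $A=a_i$, $B=-a_j$, $C=i-j$. After removing common factors, the three terms are pairwise coprime. Since $\ell>k$, none of the coefficients is divisible by $\ell$. The variable $z=d$ is divisible by $p_0$, while $p_0\nmid ABC$ (note $|i-j|<k$, so $p_0\mid i-j$ is possible; in that case I choose the pair so that $p_0\nmid i-j$, which the abundance of small-$a_i$ indices allows). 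The Frey curve attached to this solution, as in Kraus \cite{Kra1997}, then has multiplicative reduction at $p_0$. Its Galois representation mod $\ell$ arises, by modularity (Lemma \ref{lem:basic modularity}) and level lowering, from a weight-$2$ newform $f$ of level $N$ dividing $2^{O(1)}\mathrm{Rad}_2(ABC)$, with $p_0\nmid N$.

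Third, comparing traces of Frobenius at $p_0$ gives $\ell \mid \mathrm{Norm}(a_{p_0}(f)\pm(p_0+1))$. This norm is nonzero by the Ramanujan bound $|a_{p_0}(f)|\le 2\sqrt{p_0}<p_0+1$. Hence $\ell\le (p_0+1+2\sqrt{p_0})^{[K_f:\mathbb{Q}]}$, and $[K_f:\mathbb{Q}]\le \dim S_2(\Gamma_0(N))\ll N$. This yields $\log\ell \ll N\log p_0$, which is exactly what Lemma \ref{lem:modularity to bound exponent ell} should package.

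It remains to bound $N \ll \mathrm{Rad}(a_ia_j(i-j))$ by $O(k^2)$. I expect this to be the main obstacle. The naive bound $a_ia_j|i-j|\ll k^3$ is too weak, so the pair must be chosen more carefully. I would try two refinements. One is to choose $i,j$ with $a_i,a_j$ bounded by an absolute constant, or with $a_i a_j\ll k$, via a sharper form of Erdős's pigeonhole argument. The other is to exploit that primes dividing $i-j$ and also $a_i$ or $a_j$ are counted only once in the radical. If neither works, taking $j-i$ to be a small fixed number and choosing among many such pairs should keep the radical $\ll k^2$ and give $\log\ell\ll k^2\log p_0$.
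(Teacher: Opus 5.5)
\textbf{Gap: the choice of pair is never made.} Your architecture is the paper's. You difference two terms, use $p_0\mid d$ with $p_0\nmid a_ia_jx_ix_j$ as the auxiliary prime, and apply Lemma~\ref{lem:modularity to bound exponent ell}. But the step you flag as the ``main obstacle'' is the whole quantitative content of the proposition: exhibiting a pair with $\mathrm{Rad}_2(a_ia_j(i-j))\ll k^2$. You leave it as three things to try, so the proposal is not a proof.

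Two of your options cannot deliver the bound.
\begin{itemize}
\item The first fails because Lemma~\ref{lem:good set I to control ai} only gives $\prod_{i\in I}a_i\le k!$. That is consistent with every $a_i$ being of size $\asymp k$, so you cannot extract $a_i=O(1)$ or $a_ia_j\ll k$ from it.
\item Taking two of the $\gg k/\log k$ indices with $a_i\ll k$ that happen to lie close together only gives $|i-j|\ll\log k$. That yields $\log\ell\ll k^2\log k\,\log p_0$, off by a logarithm.
\end{itemize}

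\textbf{How to finish.} Your third option is the right direction, but it needs an actual counting argument.
\begin{enumerate}
\item Fix $j-i=1$. The third coefficient is then $1$, so $p_0\nmid(i-j)$ holds automatically, and you only need $a_ja_{j+1}\ll k^2$.
\item Pigeonhole over disjoint consecutive pairs rather than over individual indices. Let $P_j=\{j,j+1\}$ for even $j$. These $k/2+O(1)$ pairs are disjoint, so at most $\pi(k)$ of them meet $\{0,\dots,k-1\}\setminus I$.
\item The remaining $N\ge k/2-O(\pi(k))$ good pairs satisfy $\prod_{\text{good}}a_ja_{j+1}\le\prod_{i\in I}a_i\le k!$. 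So some good pair has
\begin{align*}
a_ja_{j+1}\le (k!)^{1/N}=(k!)^{\frac{2}{k}\left(1+O\left(\frac{1}{\log k}\right)\right)}\ll k^2
\end{align*}
by Stirling.
\item Apply Lemma~\ref{lem:modularity to bound exponent ell} to $d^\ell+a_jx_j^\ell+a_{j+1}(-x_{j+1})^\ell=0$. The hypotheses hold: $\ell>k$ is large, so $\ell>163$ and $\ell\neq p_0$; $p_0\nmid a_ja_{j+1}$; and $p_0$ divides only $d$. This gives
\begin{align*}
\log\ell\le\frac{32\,\mathrm{Rad}_2(a_ja_{j+1})+1}{6}\log(\sqrt{p_0}+1)\ll k^2\log p_0 .
\end{align*}
\end{enumerate}
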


The next proposition applies when $d$ is not divisible by any primes $<k$.

\begin{proposition}\label{prop:key prop d not divisible by any small primes}
Let $k$ be sufficiently large, and let $\ell > k$ be a prime. Suppose there are nonzero integers $t,n,d$ with $\gcd(n,d)=1$ and $d\geq 1$ such that
\begin{align*}
t^\ell = n(n+d^\ell)\cdots (n+(k-1)d^\ell).
\end{align*}
Assume $d$ is not divisible by any odd prime $< k$. Then
\begin{align*}
\log \ell \ll k^6 \log k \cdot \frac{k\#}{B\#},
\end{align*}
where $B= k^{1/(100\log \log k)}$.
\end{proposition}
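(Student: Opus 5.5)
The plan is to convert a nontrivial solution into a generalized Fermat equation of signature $(\ell,\ell,\ell)$ whose coefficients have radical at most $k^{O(1)}\cdot k\#/B\#$. Lemma \ref{lem:modularity to bound exponent ell}, or Kraus's result (Lemma \ref{lem:Kraus fermat lemma}), then bounds $\ell$ linearly in that radical, up to factors $k^{O(1)}\log k$.

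First, I use Lemma \ref{lem:factorization lemma} to write $n+id^\ell = a_i x_i^\ell$, where each $a_i$ is $\ell$-th power free and supported on primes $<k$. This holds because a prime $q\ge k$ with $q\nmid d$ divides at most one term, and hence divides it to an exponent divisible by $\ell$. For any $i\neq j$ there is the identity
\begin{align*}
a_i x_i^\ell - a_j x_j^\ell = (i-j)\,d^\ell ,
\end{align*}
and similarly for three-term combinations. With Lemma \ref{lem:modularity to bound exponent ell}, this yields $\log \ell \ll k^{O(1)}\log k\cdot \text{Rad}(a_ia_j)$ once the relevant Frey curve has no exceptional image. Since $d$ has no odd prime factor $<k$, each prime $p<k$ divides exactly the terms with $i\equiv i_p \pmod p$. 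In particular, each $p\in(k/2,k)$ divides one or two terms. The whole task is therefore to find indices whose $a$'s jointly omit a set of primes with product at least $B\#$.

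The combinatorial step comes next. I use Lemma \ref{lem:good set I to control ai} (Erd\H{o}s's argument) to restrict to a large set of indices whose $a_i$ are controlled. Then I split into cases.

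\textbf{Case 1: no term is divisible by too many primes $<k$.} Say no term is divisible by more than $\pi(k)-\pi(B)$ of them, in a weighted sense. Then for a suitable pair $i,j$, the union of primes dividing $a_ia_j$ still misses primes of total product $\gg B\#/k^{O(1)}$. I find such a pair by pigeonholing over the residues $i_p$ for the primes in $(k/2,k)$, since each of these primes divides at most two terms. Plugging this pair into the Fermat equation above gives the Bennett--Siksek argument with level reduced by a factor of about $B\#$. This is the claimed bound.

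\textbf{Case 2: some term $n+i_0d^\ell$ is divisible by almost all primes $<k$.} Here I would state and prove an auxiliary proposition (Proposition \ref{prop:some term divisible by many primes < k}). In this case the residues are essentially forced, $i_p\equiv i_0\pmod p$ for almost all $p$. So a term $n+(i_0+m)d^\ell$ is divisible, apart from a small exceptional set, only by primes dividing $m$. I then use Brun's sieve together with primes in arithmetic progressions. The aim is to find many primes $p$ such that $m=\pm p$ and $m=\pm cp$, with $c\in\{2,3\}$, avoid the exceptional primes. For such $p$, the terms at $i_0\pm p$ and $i_0\pm cp$ have $a$'s supported on $\{2,3,p\}$ together with a bounded number of other primes. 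A linear combination of these terms with small integer coefficients, in which the $d^\ell$ contributions cancel, then produces
\begin{align*}
A x^\ell + B' y^\ell = C z^\ell
\end{align*}
with $\text{Rad}(AB'C)\le k^{O(1)}$. Kraus's lemma gives $\log\ell\ll\log k$ in this case, which is much better than required.

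I expect the main obstacle to be the interface between the two cases. Concretely, this means quantifying ``many primes'' so that in Case 1 the omitted primes really have product $\ge B\#$ for some pair. In Case 2, the exceptional set must be small enough, of size about $k^{1-\epsilon}$ or so, for a Brun sieve of dimension about $\log\log k$ to still leave a prime $p$. The choice $B=k^{1/(100\log\log k)}$ is dictated exactly by the sieve dimension. I would first try to run Case 2 with the exceptional set being the primes $<B$ dividing no useful term, sieving the primes $p\le k/3$ with $p\not\equiv \pm i_q - i_0$ modulo each exceptional prime $q$, plus the analogous conditions for $cp$. A secondary technicality is ensuring that the Frey curves in Case 1 avoid exceptional image and have the right parity conditions. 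This may force the extra factors $k^6$, coming from the powers of $2$ and $3$ and from $i-j$.
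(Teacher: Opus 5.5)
Your Case 1 has a genuine gap, and it sits exactly where the paper does its main combinatorial work. Lemma \ref{lem:modularity to bound exponent ell} does not bound $\ell$ in terms of $\textup{Rad}(abc)$ alone. It needs an auxiliary odd prime $p$ of controlled size (the bound carries a factor $\log(\sqrt{p}+1)$) with $p\nmid abc$ and $p$ dividing exactly one of $u,v,w$; that is what forces a prime out of the level. Irreducibility is automatic from Mazur since $\ell>163$, so ``exceptional image'' is not the obstacle, and $k^6$ has nothing to do with parity.

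For your pair equation $a_ix_i^\ell-a_jx_j^\ell=(i-j)d^\ell$, such a prime exists when some $p\in(k/2,k)$ divides a single term. Then $v_p(n+i_pd^\ell)\equiv0\pmod{\ell}$, so $p\mid x_{i_p}$ and $p\nmid a_{i_p}$, and one takes $i=i_p$ and $j\in I$ with $a_j\ll k$. But suppose every $p\in(k/2,k)$ divides both $n+i_pd^\ell$ and $n+(i_p+p)d^\ell$. Then one of these has $v_p=1$ and the other has $v_p\equiv-1\pmod{\ell}$, so $p\mid a_{i_p}a_{i_p+p}$ and $p$ is useless for every pair equation. The paper instead uses, for $r+s=p$,
\[
(n+i_pd^\ell)(n+(i_p+p)d^\ell)-(n+(i_p+r)d^\ell)(n+(i_p+s)d^\ell)+rs\,d^{2\ell}=0 .
\]
After absorbing $p^\ell$, the prime $p$ divides only the first unknown. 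Choosing $r$ with $a_{i_p+r}a_{i_p+s}\ll k^4$ and using $rs\le k^2$ is the source of $k^6$.

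The saving now requires the set $S_p$ of primes $<k$ dividing the \emph{product} of the two terms to omit at least $B$ primes. Your hypothesis that each single term omits $\pi(B)$ primes does not give this, since the two terms can split the primes between them. Closing this is the content of Lemmas \ref{lem:Sp large implies indices collide}--\ref{lem:every prime divides two, some single term has many prime divisors}:
\begin{itemize}
\item If $|S_p|,|S_r|\ge\frac23\pi(k)$, then $i_p=i_r$ or $i_p+p=i_r+r$. Otherwise the at least $\frac13\pi(k)-2$ primes shared by $S_p\setminus\{p\}$ and $S_r\setminus\{r\}$ would all divide nonzero index differences of size $<k$, which allows only $\ll\log k$ of them.
\item Hence all $i_p$ coincide or all $i_p+p$ coincide.
\item A Mertens average over the $\sim k/(2\log k)$ distinct other endpoints finds one with at most $3\log k\log\log k$ prime factors $<k$.
\end{itemize}
So the common term is divisible by more than $\pi(k)-B-O(\log k\log\log k)$ primes, and Proposition \ref{prop:some term divisible by many primes < k} applies. ``Pigeonholing over the residues $i_p$'' does not supply this argument, and your two case thresholds must be offset by that $O(\log k\log\log k)$.

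Your Case 2 strategy matches the paper: Brun sieve plus Bombieri--Vinogradov over primes $p$ such that $i_0+\varepsilon p$ and $i_0+c\varepsilon p$ avoid the exceptional classes. The endgame, however, needs repair.
\begin{itemize}
\item Cancel $n$, not $d^\ell$. Cancelling $d^\ell$ between the two terms just returns $(c-1)(n+i_0d^\ell)$, whose $a_{i_0}$ has huge radical. Subtracting instead gives $a_{i_0+c\varepsilon p}x^\ell-a_{i_0+\varepsilon p}y^\ell=(c-1)\varepsilon p\,d^\ell$.
\item Lemma \ref{lem:Kraus fermat lemma} needs the radical of all coefficients to divide $2p$, not ``$\{2,3,p\}$ plus boundedly many primes''. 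With two odd primes in the radical, neither that lemma nor, absent an auxiliary prime, Lemma \ref{lem:modularity to bound exponent ell} applies.
\item So sieve out \emph{every} odd $q<k$ with $q\nmid n+i_0d^\ell$. These can lie anywhere below $k$. The paper sieves those below $k^{1/(99\log\log k)}$ at level $\le k^{1/4}$ and discards the rest trivially, which is why their number must be $O(B)$ rather than $k^{1-\epsilon}$.
\item Any surviving odd prime factor $q<k$ of either term then divides $n+i_0d^\ell$, hence $\varepsilon p\,d^\ell$ or $c\varepsilon p\,d^\ell$, so $q\mid cp$. Take $c=2$ when $3\mid n+i_0d^\ell$ and $c=3$ otherwise. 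In the latter case $q=3$ is impossible, $i_0+3\varepsilon p\equiv i_0\pmod 3$ automatically, and a condition on $p\bmod 3$ handles $i_0+\varepsilon p$. Either way $q=p$.
\item You also need $p$ to be neither Mersenne nor Fermat, $d>1$ (by Erd\H{o}s--Selfridge) to rule out $u,v,w=\pm1$, and pairwise coprimality of $x,y,d$ (from $\ell>k>p$).
\end{itemize}
The result is $\log\ell\ll p\log p\ll k\log k$, not $\ll\log k$; this is harmless for the proposition.
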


\begin{proof}[Proof of Theorem \ref{thm:main theorem} assuming Propositions \ref{prop:key prop d divisible by a small prime} and \ref{prop:key prop d not divisible by any small primes}]
We assume $k$ is sufficiently large, and that $\ell$ is a prime. If $\ell \leq k$ then the theorem holds, so we may assume $\ell > k$. It follows that $k$ and $\ell$ are coprime.

Let $(x,y) \in (\mathbb{Q}^\times)^2$ be a nontrivial rational point on $\mathcal{C}_{k,\ell}$. By Lemma \ref{lem:reduction to integrality}, there are nonzero integers $t,n,d$ with $\gcd(n,d)=1$ and $d \geq 1$ such that
\begin{align*}
t^\ell = n(n+d^\ell)\cdots (n+(k-1)d^\ell).
\end{align*}
If $d$ is divisible by some odd prime $<k$, then the theorem follows from Proposition \ref{prop:key prop d divisible by a small prime}. On the other hand, if $d$ is not divisible by any odd prime $<k$, then the theorem follows from Proposition \ref{prop:key prop d not divisible by any small primes}.
\end{proof}

\section{Preliminaries}\label{sec:preliminaries}

\begin{lemma}\label{lem:factorization lemma}
Let $\ell \geq 3$ be an odd prime. Suppose we have integers $n,t,d$ as in the conclusion of Lemma \ref{lem:reduction to integrality}. Then for $0 \leq i \leq k-1$ we have $n+id^\ell = a_i x_i^\ell$, where $a_i$ is a positive integer that is $\ell$-power-free and divisible only by primes $\leq k-1$. Furthermore, for $0 \leq i < j \leq k-1$, we have $\gcd(a_i,a_j) \mid (j-i)$.
\end{lemma}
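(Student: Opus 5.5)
The plan is to get the factorization from a $p$-adic analysis of each term, using that distinct terms share only primes dividing their index difference. The facts needed are these: every term is nonzero, two distinct terms have gcd dividing $j-i$, primes $p \geq k$ therefore divide at most one term, and the $\ell$-th power condition on the product transfers to each term.

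First, since $t \neq 0$, every factor $n+id^\ell$ is a nonzero integer. Next, for $0 \leq i < j \leq k-1$, set $g = \gcd(n+id^\ell, n+jd^\ell)$. Then $g$ divides the difference $(j-i)d^\ell$. Any prime $p \mid \gcd(g,d)$ would divide $n+id^\ell$ and $d$, hence $n$, which contradicts $\gcd(n,d)=1$. So $\gcd(g,d)=1$, and therefore $g \mid (j-i)$. In particular, a prime $p \geq k$ cannot divide two distinct terms, since $0<j-i<k$.

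Now define, for each $i$,
\begin{align*}
a_i = \prod_{p \mid n+id^\ell} p^{\,r_p}, \qquad r_p \equiv v_p(n+id^\ell) \pmod{\ell},\ 0 \leq r_p < \ell.
\end{align*}
By construction $a_i$ is positive and $\ell$-power-free, and $|n+id^\ell|/a_i$ is a perfect $\ell$-th power, say $z^\ell$ with $z>0$. Because $\ell$ is odd, the sign of $n+id^\ell$ can be absorbed by setting $x_i = \pm z$. This gives $n+id^\ell = a_i x_i^\ell$.

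To see that $a_i$ is supported on primes $\leq k-1$, take $p \geq k$ with $p \mid n+id^\ell$. By the previous step, $p$ divides no other term. Hence $v_p(n+id^\ell) = v_p(t^\ell) = \ell\, v_p(t)$, so $r_p = 0$.

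Finally, $a_i \mid n+id^\ell$ and $a_j \mid n+jd^\ell$. Therefore $\gcd(a_i,a_j) \mid g$, and $g \mid (j-i)$.

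I do not expect a real obstacle here. The only points needing care are the coprimality argument that removes $d^\ell$ from the gcd and the sign of $n+id^\ell$, which is handled by the oddness of $\ell$.
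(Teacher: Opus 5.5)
Your proof is correct and follows essentially the same route as the paper: use $\gcd(n,d)=1$ to get $\gcd(n+id^\ell,n+jd^\ell)\mid(j-i)$, deduce that each prime $p\geq k$ divides only one term and so occurs there to a multiple of $\ell$, then strip $\ell$-th powers and absorb the sign using that $\ell$ is odd. You are more explicit than the paper about the valuation bookkeeping and about deducing $\gcd(a_i,a_j)\mid(j-i)$ from $a_i\mid n+id^\ell$, which the paper leaves implicit.
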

\begin{proof}
Since $n$ and $d$ are coprime, we have $\gcd(n+id^\ell,n+jd^\ell)\mid (j-i)$ for $0 \leq i < j \leq k-1$. Hence, if there is a prime $p \geq k$ that divides $t$, then $p$ divides a single term $n+id^\ell$. We may then write $n+id=b_iz_i^\ell$, where $b_i$ is only divisible by primes $\leq k-1$, and $z_i$ is only divisible by primes $\geq k$. We obtain the conclusion of the lemma by pulling $\ell$-th powers and minus signs on $b_i$ into $z_i^\ell$.
\end{proof}

The factorizations in Lemma \ref{lem:factorization lemma} are fundamental, and we use Lemma \ref{lem:factorization lemma} throughout the paper without explicit reference.

\begin{lemma}\label{lem:good set I to control ai}
Let the integers $a_i$ be as in the conclusion of Lemma \ref{lem:factorization lemma}. There exists a set $I \subseteq \{0,1,\ldots,k-1\}$ with $|I| \geq k-\pi(k)$ such that
\begin{align*}
\prod_{i \in I} a_i \leq k!.
\end{align*}
\end{lemma}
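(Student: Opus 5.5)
This is the classical Erd\H{o}s argument: for each prime $p<k$, discard one index $i_p$ at which $a_i$ has maximal $p$-adic valuation. Then bound the $p$-part of the product of the remaining $a_i$ by the $p$-part of $k!$.

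\textbf{Construction of $I$.} For each prime $p \le k-1$ (only such primes divide the $a_i$ by Lemma \ref{lem:factorization lemma}), choose $i_p \in \{0,\dots,k-1\}$ maximizing $v_p(n+id^\ell)$. Set $I = \{0,\dots,k-1\}\setminus\{i_p : p<k\}$, so that $|I| \ge k-\pi(k)$. Since $a_i$ divides $n+id^\ell$, it suffices to show that for each prime $p<k$,
\begin{align*}
\sum_{i\in I} v_p(n+id^\ell) \le v_p(k!).
\end{align*}
Summing over $p$ then gives $\prod_{i\in I}a_i \le k!$, as each $a_i$ is positive.

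\textbf{Counting step.} Fix $p<k$. If $p\mid d$, then $p\nmid n$ because $\gcd(n,d)=1$. Hence $p\nmid n+id^\ell$ for every $i$, and the sum is $0$.

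So suppose $p\nmid d$. For each $j\ge1$, the indices $i$ with $p^j \mid n+id^\ell$ form a single residue class modulo $p^j$, because $d^\ell$ is invertible modulo $p^j$. We have
\begin{align*}
\sum_{i \in I} v_p(n+id^\ell) = \sum_{j\ge1} \#\{i\in I : p^j\mid n+id^\ell\}.
\end{align*}
Among $k$ consecutive indices, a residue class modulo $p^j$ contains at most $\lceil k/p^j\rceil \le \lfloor k/p^j\rfloor + 1$ elements. Moreover, whenever the class is nonempty in $\{0,\dots,k-1\}$, it contains $i_p$, since $i_p$ has maximal valuation. Removing $i_p$ therefore leaves at most $\lfloor (k-1)/p^j\rfloor$ elements of $I$ in the class. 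One checks that $\lceil k/p^j\rceil - 1 \le \lfloor (k-1)/p^j \rfloor$.

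Summing over $j$ gives $\sum_{i\in I} v_p(n+id^\ell) \le \sum_j \lfloor (k-1)/p^j\rfloor = v_p((k-1)!) \le v_p(k!)$, by Legendre's formula.

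\textbf{Anticipated difficulty.} The only delicate point is justifying that $i_p$ lies in every nonempty class $\{i : p^j\mid n+id^\ell\}$. This holds because $v_p(n+i_pd^\ell) \ge v_p(n+id^\ell) \ge j$ for any $i$ in that class.

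The case $p\mid d$ needs the coprimality $\gcd(n,d)=1$. The prime $p=2$ needs no special treatment.

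One subtlety is that, since $n$ may be negative, $n+id^\ell$ could be $0$ for some $i$, making its valuation infinite. Lemma \ref{lem:factorization lemma} presupposes $t\neq0$, which rules this out. Then every $n+id^\ell$ is nonzero, so all valuations are finite, the maximizer $i_p$ is well defined, and the counting goes through.
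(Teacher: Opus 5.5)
Your proof is correct and is the same approach as the paper, which simply cites \cite[Lemma 2]{ES1975}: for each prime $p<k$ you strike out an index where $v_p(n+id^\ell)$ is maximal and compare the remaining $p$-adic valuations with Legendre's formula. You also correctly adapt that argument to the progression $n+id^\ell$, using $\gcd(n,d)=1$ to dispose of primes $p\mid d$ and $t\neq 0$ to keep all valuations finite, and you in fact obtain the slightly stronger bound $(k-1)!$.
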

\begin{proof}
This is essentially \cite[Lemma 2]{ES1975}.
\end{proof}

Whenever we write $I$ in this paper, we always mean the set $I$ as in the conclusion of Lemma \ref{lem:good set I to control ai}.

We need some results on solutions to generalized Fermat equations. As usual, modularity and level-lowering plays a key role.

\begin{lemma}\label{lem:basic modularity}
Let $\ell$ be a rational prime. Let $E/\mathbb{Q}$ be an elliptic curve of conductor $N$ and $f(z)=z+\sum_{i\geq 2} c_i z^i$ be a newform of weight $2$ and level $N' \mid N$. Write $K = \mathbb{Q}(c_1,c_2,\ldots)$ for the totally real number field generated by the Fourier coefficients of $f$. If $\overline{\rho}_{E,\ell}$ arises from $f$, then there is some prime $\lambda \mid \ell$ of $K$ such that for all primes $q$:
\begin{itemize}
\item if $q \nmid \ell N N'$, then $a_q(E) \equiv c_q \pmod{\lambda}$;
\item if $q \nmid \ell N'$ and $q \| N$, then $q+1 \equiv \pm c_q \pmod{\lambda}$.
\end{itemize}
\end{lemma}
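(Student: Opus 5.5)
The plan is to unwind the definition of ``$\overline{\rho}_{E,\ell}$ arises from $f$'' and compare traces of Frobenius elements on both sides. By definition, and following Ribet and Kraus--Oesterl\'e, this hypothesis means there is a prime $\lambda \mid \ell$ of $K$ with the following property. Let $\rho_{f,\lambda} : G_{\mathbb{Q}} \to \mathrm{GL}_2(K_\lambda)$ be the $\lambda$-adic representation attached to $f$ by Eichler--Shimura and Deligne, and $\overline{\rho}_{f,\lambda}$ its reduction modulo $\lambda$, valued in $\mathrm{GL}_2(\mathcal{O}_K/\lambda)$. Then $\overline{\rho}_{E,\ell} \otimes \overline{\mathbb{F}}_\ell$ and $\overline{\rho}_{f,\lambda}\otimes \overline{\mathbb{F}}_\ell$ have isomorphic semisimplifications. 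By Brauer--Nesbitt, it is enough to know that the characteristic polynomials of all $g \in G_{\mathbb{Q}}$ agree, so only traces and determinants need to be compared. I will use the following standard facts throughout:
\begin{itemize}
\item $\rho_{f,\lambda}$ is unramified outside $\ell N'$.
\item For $q \nmid \ell N'$, the trace of $\rho_{f,\lambda}(\mathrm{Frob}_q)$ is $c_q$ and the determinant is $q$.
\item $\rho_{E,\ell}$ is unramified outside $\ell N$.
\item For $q\nmid \ell N$, the trace of $\rho_{E,\ell}(\mathrm{Frob}_q)$ is $a_q(E)$.
\end{itemize}

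For the first bullet of the lemma, take $q \nmid \ell N N'$. Both representations are unramified at $q$, so $\mathrm{Frob}_q$ has a well-defined characteristic polynomial on each side. Comparing traces through the isomorphism of semisimplifications gives $a_q(E) \equiv c_q \pmod{\lambda}$, viewing $a_q(E) \in \mathbb{Z}$ inside $\mathcal{O}_K$.

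For the second bullet, take $q \nmid \ell N'$ with $q \,\|\, N$, so $E$ has multiplicative reduction at $q$. The side of $f$ is still unramified at $q$, with trace of Frobenius $c_q$. Since the semisimplifications agree, $\overline{\rho}_{E,\ell}^{ss}$ is also unramified at $q$, and the trace of $\mathrm{Frob}_q$ on it is well defined. To compute it, I restrict to a decomposition group $G_{\mathbb{Q}_q}$ and use Tate's uniformization. After an unramified quadratic twist by a character $\varepsilon$, which is trivial for split and nontrivial for nonsplit multiplicative reduction, $E$ becomes a Tate curve. Hence $\rho_{E,\ell}|_{G_{\mathbb{Q}_q}}$ is an extension of $\varepsilon$ by $\varepsilon\chi_\ell$, where $\chi_\ell$ is the cyclotomic character. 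The semisimplification of its reduction is therefore $\varepsilon \oplus \varepsilon\overline{\chi}_\ell$, which is unramified with Frobenius trace $\varepsilon(\mathrm{Frob}_q)(1+q) = \pm(q+1)$. Equating this with $c_q$ modulo $\lambda$ gives $q+1 \equiv \pm c_q \pmod{\lambda}$.

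The only delicate point is this second case. There $\overline{\rho}_{E,\ell}$ itself may be ramified at $q$, because the Tate extension class is nontrivial when $\ell \nmid v_q(j_E)$. So one must work with semisimplifications and Brauer--Nesbitt rather than with the representation itself. One must also keep track of the sign coming from the split versus nonsplit twist. Everything else is a formal comparison of traces. If desired, I would cite Kraus--Oesterl\'e or Siksek's survey on the modular approach for this standard argument rather than reproving the local description at $q$.
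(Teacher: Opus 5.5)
Your argument is correct: comparing Frobenius traces for $q \nmid \ell N N'$, and at $q \,\|\, N$ using Tate's uniformization (up to the unramified quadratic twist) to get trace $\pm(q+1)$ on the semisimplification, is the standard Kraus--Oesterl\'e proof. The paper gives no argument of its own here and simply cites \cite[Lemma 3.1]{BS2016}, which rests on that same standard argument.
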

\begin{proof}
This is \cite[Lemma 3.1]{BS2016}.
\end{proof}

We can use modularity to bound the exponents in generalized Fermat equations. Note that, for a positive integer $n$, we have
\begin{align*}
    \text{Rad}_2(n) = \prod_{\substack{p \mid n \\ p > 2}} p.
\end{align*}

\begin{lemma}\label{lem:modularity to bound exponent ell}
Let $\ell >163$ be a prime. Let $a,b,c,u,v,w$ be nonzero integers with
\begin{align*}
au^\ell+bv^\ell+cw^\ell = 0,
\end{align*}
and let $p\neq \ell$ be an odd prime, such that:
\begin{enumerate}
\item $a,b,c$ are $\ell$-power-free,
\item $p\nmid abc$,
\item $p$ divides exactly one of $u$, $v$, or $w$.
\end{enumerate}
Then
\begin{align*}
\log \ell \leq \frac{32\textup{Rad}_2(abc)+1}{6} \log(\sqrt{p}+1).
\end{align*}
\end{lemma}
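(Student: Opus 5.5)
This is the standard Frey-curve argument, as in Bennett--Siksek \cite[Section 3]{BS2016}.

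\emph{Normalization and Frey curve.} First I would normalize the equation $au^\ell+bv^\ell+cw^\ell=0$. Dividing out any common factor of the three terms, and relabelling signs, lets me assume $A=au^\ell$, $B=bv^\ell$, $C=cw^\ell$ are pairwise coprime with $A+B+C=0$. Hypothesis (3) survives this, since $p\nmid abc$. Permuting the terms, I may assume $B\equiv 0\pmod 2$ (possible after removing common factors) and $A\equiv -1\pmod 4$. Consider the Frey curve
\begin{align*}
E: Y^2 = X(X-A)(X+B).
\end{align*}
It is semistable away from $2$. Its minimal discriminant is $2^{s}(ABC)^2$ up to bounded powers of $2$, and its conductor is
\begin{align*}
N=2^{r}\,\mathrm{Rad}_2(abcuvw), \qquad r\le 5 .
\end{align*}

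\emph{Level lowering.} Since $\ell>163$, Mazur's theorem and the isogeny results used by Bennett--Siksek show that $\overline{\rho}_{E,\ell}$ is irreducible. Modularity plus Ribet's level lowering then say $\overline{\rho}_{E,\ell}$ arises from a newform $f$ of weight $2$ and level
\begin{align*}
N' = 2^{r}\,\mathrm{Rad}_2(abc).
\end{align*}
The point is that a prime $q\nmid 2abc$ dividing $uvw$ divides $ABC$ to an exponent divisible by $\ell$ (using $\ell$-power-freeness of $a,b,c$). So $\ell$ divides the valuation of the minimal discriminant at $q$, and $q$ drops out of the level.

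\emph{Bounding $\ell$.} Apply Lemma \ref{lem:basic modularity} with $q=p$. By (2) and (3), $p$ divides $uvw$ but not $abc$, so $p\| N$ and $p\nmid N'$. Also $p\ne \ell$. Hence
\begin{align*}
p+1\equiv \pm c_p \pmod{\lambda}.
\end{align*}
Let $K=\mathbb{Q}(c_1,c_2,\dots)$. Then $\ell$ divides the norm
\begin{align*}
\mathrm{Norm}_{K/\mathbb{Q}}\bigl(p+1\mp c_p\bigr),
\end{align*}
and this norm is nonzero because $|c_p|\le 2\sqrt p<p+1$ by Deligne's bound. Each conjugate of $p+1\mp c_p$ is at most $p+1+2\sqrt p=(\sqrt p+1)^2$ in absolute value, and $[K:\mathbb{Q}]\le \dim S_2^{\mathrm{new}}(\Gamma_0(N'))$. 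This gives
\begin{align*}
\log\ell\le [K:\mathbb{Q}]\cdot 2\log(\sqrt p+1).
\end{align*}

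\emph{Main obstacle.} The hardest part is the dimension estimate and the bookkeeping at $2$. I need $[K:\mathbb{Q}]\le (32\,\mathrm{Rad}_2(abc)+1)/12$, so that twice it matches the stated bound. The genus of $X_0(N')$ is at most $1+\psi(N')/12$, and
\begin{align*}
\psi(2^5m)\le 48\,\psi(m).
\end{align*}
On the crude bound $\psi(m)\le m$ (which $\psi$ does not satisfy, since $\psi(m)=m\prod_{q\mid m}(1+1/q)$), this becomes $4m$, which is too large by a factor of about $3/2$. So I would instead use the sharper bound on newforms of level $2^rm$: for example $\dim S_2^{\mathrm{new}}\le (\varphi\text{-type count})$, or the choice $r\le 5$ giving $\dim\le \frac{g}{\,}$ via the Martin-type estimate $\dim S_2^{\mathrm{new}}(\Gamma_0(N))\le (N+1)/12$ with $N'\le 32\,\mathrm{Rad}_2(abc)$. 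I expect verifying that the exponent of $2$ in $N'$ is at most $5$ to need a careful case check on $A,B \bmod 4$ and $32$. This is the step I would check against Bennett--Siksek's Lemma 3.2.
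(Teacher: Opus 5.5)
Your proposal follows the paper's proof step for step: you normalize to pairwise coprime terms with $au^\ell\equiv -1 \pmod 4$ and $bv^\ell$ even, take the Frey curve $Y^2=X(X-au^\ell)(X+bv^\ell)$, use Mazur for irreducibility and Kraus for the conductor and level lowering, apply Lemma~\ref{lem:basic modularity} at $q=p$, and finish with Martin's bound $\dim S_2^{\mathrm{new}}(\Gamma_0(N'))\le (N'+1)/12$ and $N'\le 32\,\mathrm{Rad}_2(abc)$, which is exactly where you end up, so the genus/$\psi$ detour can be dropped. One small correction: state the lowered level as $N'\mid 2^5\,\mathrm{Rad}_2(abc)$ rather than $N'=2^r\,\mathrm{Rad}_2(abc)$ with the same $r$ as in $N$, since the $2$-part can change or vanish and odd primes removed during normalization disappear; only the divisibility is needed.
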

\begin{proof}
This follows from arguments of Bennett and Siksek \cite[Lemma 2.1 and Section 3]{BS2016}, but we give the details for completeness. (See also \cite[Lemma 6]{Edis2019}.)

By removing any potential common factors, we may assume $au^\ell,bv^\ell,cw^\ell$ are pairwise coprime while still maintaining the hypotheses of the lemma. (The analysis requires a bit of casework, and is easier if one remembers that whenever we have nonzero integers $x+y+z=0$ with $v_p(x) \geq v_p(y) \geq v_p(z)$, say, then we must have $v_p(y)=v_p(z)$.) We may permute and change signs as necessary to assume
\begin{align*}
au^\ell \equiv -1 \pmod{4}, \ \ \ \ bv^\ell\equiv 0 \pmod{2}.
\end{align*}
Having done so, we form the Frey curve
\begin{align*}
E : Y^2 = X(X-au^\ell)(X+bv^\ell).
\end{align*}
With $E$ in hand, we can then consider the Galois representation $\overline{\rho}_{E,\ell}: \text{Gal}(\overline{\mathbb{Q}}/\mathbb{Q}) \rightarrow \text{GL}_2(\mathbb{F}_\ell)$. Since $\ell >163$, work of Mazur \cite[Theorem 1]{Maz1978} implies the representation $\overline{\rho}_{E,\ell}$ is irreducible.

By work of Kraus \cite[Sections 3 and 4]{Kra1997}, the conductor of $E$ is equal to $N = 2^r \text{Rad}_2(abcuvw)$, where $0 \leq r \leq 5$, and $E$ is related via level-lowering to a newform $f$ of weight $2$ and level $N' \mid 2^5 \text{Rad}_2(abc)$. Write $K$ for the totally real number field generated by $f$.

Our hypotheses imply $p \mid N$ and $p \nmid N'$. By Lemma \ref{lem:basic modularity}, we have
\begin{align*}
\ell \mid N_{K/\mathbb{Q}}(p+1\pm c_p).
\end{align*}
Every real embedding of $c_p$ is bounded in absolute value by $2\sqrt{p}$, so this norm is nonzero and we have
\begin{align*}
\ell \leq (\sqrt{p}+1)^{2[K:\mathbb{Q}]}.
\end{align*}
The degree $[K:\mathbb{Q}]$ of the number field is bounded above by the dimension of the space of cuspidal newforms of weight 2 and level $N'$. This dimension is $\leq \frac{N'+1}{12}$ by work of Martin \cite[Theorem 2]{Mar2005}, so
\begin{align*}
\log \ell &\leq \frac{N'+1}{6}\log(\sqrt{p}+1) \leq \frac{32\text{Rad}_2(abc)+1}{6} \log(\sqrt{p}+1).\qedhere
\end{align*}
\end{proof}

\section{Proof of Proposition \ref{prop:key prop d divisible by a small prime}}\label{sec:proof of easier key prop}

The proof of Proposition \ref{prop:key prop d divisible by a small prime} follows, in a mostly straightforward way, from the results in the previous section.

\begin{proof}[Proof of Proposition \ref{prop:key prop d divisible by a small prime}]
Let $p_0$ be an odd prime that divides $d$. For any $0 \leq j < k-1$, we then have
\begin{align*}
d^\ell = (n+(j+1)d^\ell) - (n+jd^\ell) = a_{j+1}x_{j+1}^\ell - a_j x_j^\ell,
\end{align*}
so
\begin{align*}
d^\ell + a_jx_j^\ell + a_{j+1}(-x_{j+1})^\ell = 0.
\end{align*}
Since $p_0 \mid d$ and $\gcd(n,d)=1$, we have $p_0 \nmid a_ja_{j+1}x_jx_{j+1}$. It follows by Lemma \ref{lem:modularity to bound exponent ell} that
\begin{align*}
\log \ell &\leq \frac{32\textup{Rad}_2(a_ja_{j+1})+1}{6} \log(\sqrt{p_0}+1).
\end{align*}
Since $\text{Rad}_2(a_ja_{j+1}) \leq a_ja_{j+1}$, in order to complete the proof it suffices to show we can find some $j$ such that $a_j a_{j+1} \ll k^2$.

Recall the set $I$ in the conclusion of Lemma \ref{lem:good set I to control ai}. For $0 \leq j \leq k-2$, let $P_j = \{j,j+1\}$. Observe that $P_0,P_2,P_4,\ldots$ are disjoint, and there are $\frac{k}{2} + O(1)$ such sets $P_j$ that are subsets of $\{0,1,\ldots,k-1\}$. Let us say $P_j$, with $j$ even, is ``good'' if $j,j+1 \in I$, and $P_j$ is ``bad'' if $j \not \in I$ or $j+1 \not \in I$. Since the sets $P_j$ are disjoint, the number of bad $P_j$ is $\leq \pi(k)$ because each bad $P_j$ must contain at least one element of $\{0,1,\ldots,k-1\}\backslash I$. Therefore, the number $N$ of good $P_j$ is $\geq \frac{k}{2} - O(\pi(k))$. 

It follows from Lemma \ref{lem:good set I to control ai} that
\begin{align*}
\prod_{P_j \text{ good}}a_ja_{j+1} \leq k!.
\end{align*}
Taking the minimal product, we then see there is some $j$ such that
\begin{align*}
a_ja_{j+1} \leq (k!)^{\frac{1}{N}} \leq (k!)^{\frac{2}{k}(1+O(\frac{1}{\log k}))}.
\end{align*}
Stirling's formula then implies $a_ja_{j+1} \ll k^2$.
\end{proof}

\section{Reduction of Proposition \ref{prop:key prop d not divisible by any small primes} to Proposition \ref{prop:some term divisible by many primes < k}}\label{sec:reduction of key prop to another prop}

The proof of Proposition \ref{prop:key prop d not divisible by any small primes} is substantially more involved than the proof of Proposition \ref{prop:key prop d divisible by a small prime}. Similar to the method of Bennett and Siksek, we use primes $p \in (\frac{k}{2},k)$, but our analysis is much more intricate. 

By assumption, $p \nmid d$ for all odd primes $p < k$. Therefore, for each prime odd $p < k$, there exists some $i_p$ such that $p$ divides $n+id^\ell$ if and only if $i \equiv i_p \pmod{p}$; we shall use this fact repeatedly in what follows without further comment. In particular, each prime $p \in (\frac{k}{2},k)$ divides at least one term $n+i_pd^\ell$, and divides at most two terms $n+i_pd^\ell,n+(i_p+p)d^\ell$. We must break into cases depending on which of these situations occur, and we must also break into cases depending on how many primes $<k$ divide terms $n+id^\ell$. The following result is our main tool when there is some term $n+id^\ell$ divisible by many primes $<k$.

\begin{proposition}\label{prop:some term divisible by many primes < k}
Let $k$ be sufficiently large, and let $\ell > k$ be a prime. Suppose there are nonzero integers $t,n,d$ with $\gcd(n,d)=1$ and $d\geq 1$ such that
\begin{align*}
t^\ell = n(n+d^\ell)\cdots (n+(k-1)d^\ell).
\end{align*}
Assume $d$ is not divisible by any odd prime $< k$, and assume there is some $0 \leq i_0 \leq k-1$ such that $n+i_0d^\ell$ is divisible by $>\pi(k)-100 k^{1/(100\log \log k)}$ primes $<k$. Then
\begin{align*}
\log \ell \ll k \log k.
\end{align*}
\end{proposition}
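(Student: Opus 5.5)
The plan is to exploit the rigid divisibility structure forced by the hypothesis. Let $E$ be the set of primes $q<k$ with $q\nmid n+i_0d^\ell$, so $|E|<100B$ with $B=k^{1/(100\log\log k)}$. Every odd prime $q<k$ satisfies $q\nmid d$, so for $q\notin E$ (odd) we have $q\mid n+jd^\ell$ if and only if $j\equiv i_0\pmod q$. Hence for any integer $c$ and any prime $p<k$, a prime $q<k$ with $q\notin E\cup\{2\}$ divides $n+(i_0+cp)d^\ell$ only if $q\mid cp$. Consequently, for indices $j=i_0+cp$ with $c\in\{\pm1,\pm2,\pm3\}$ in $[0,k-1]$, the $\ell$-power-free part $a_j$ from Lemma \ref{lem:factorization lemma} is supported on $\{2,3,p\}\cup E$.

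The first step is to choose $p$ by a sieve so that the primes of $E$ are also avoided. For $q\in E$ (odd) there is a single class $i_q \bmod q$ of indices whose term is divisible by $q$. Requiring $i_0+cp\not\equiv i_q\pmod q$ for the few values of $c$ used excludes at most $O(1)$ residue classes of $p$ modulo each $q\in E$.
\begin{itemize}
\item For the small $q\in E$ (say $q\le B^{C}$) I would apply the Brun sieve to the primes $p$ in a dyadic-type range such as $(k/8,k/6)$ (chosen so the needed indices lie in $[0,k-1]$ on one side of $i_0$). Combined with the prime number theorem in arithmetic progressions for moduli that are small powers of $k$, this leaves $\gg \frac{k}{\log k}\prod_{q\in E}(1-O(1/q))\gg k/(\log k)^{O(1)}$ admissible $p$.
\item The large $q\in E$ are removed trivially, since each such $q$ kills only $O(k/q+1)$ primes $p$, and $|E|<100B$ is tiny.
\end{itemize}
This is the step I expect to be the main obstacle: the sieve must be run with a level of distribution compatible with $|E|$ up to $100B$, which is exactly where the exponent $1/(100\log\log k)$ enters. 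I would first try the fundamental lemma of the Brun sieve with sifting range $B$ and level $k^{1/10}$.

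For a surviving $p$, the three terms $n+(i_0+cp)d^\ell$ with $c=1,2,3$ satisfy the linear relation
\begin{align*}
(n+(i_0+p)d^\ell)-2(n+(i_0+2p)d^\ell)+(n+(i_0+3p)d^\ell)=0,
\end{align*}
which gives $a_{i_0+p}x_{i_0+p}^\ell-2a_{i_0+2p}x_{i_0+2p}^\ell+a_{i_0+3p}x_{i_0+3p}^\ell=0$ with coefficients supported on $\{2,3,p\}$. The prime $p$ divides all three terms. I would handle this through $p$-adic valuations: since each term equals $(n+i_0d^\ell)+cpd^\ell$, if $p^2\mid n+i_0d^\ell$ then every term has $p$-adic valuation exactly $1$ and $p$ cancels. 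If instead $v_p(n+i_0d^\ell)=1$, I would use the sieve a second time, counting pairs $(p,c)$ and using the ultrametric observation recorded in the proof of Lemma \ref{lem:modularity to bound exponent ell}, to arrange that the $p$-part of the coefficients has radical at most $p$. Either way we obtain a ternary equation $Au^\ell+Bv^\ell+Cw^\ell=0$ with $\mathrm{Rad}_2(ABC)\le 3p\ll k$.

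Finally I apply Lemma \ref{lem:modularity to bound exponent ell}, or the Kraus-type Lemma \ref{lem:Kraus fermat lemma} announced in the outline, with an auxiliary odd prime $p'<k$ dividing exactly one of $u,v,w$ but not $ABC$. The natural candidate is a prime $p'\notin E\cup\{2,3,p\}$ that divides exactly one of the indices $c$ once the triple $(1,2,3)$ is replaced by a slightly longer progression. The permissible choices of $c$ are bounded, so $p'\ll k$. The lemma then yields
\begin{align*}
\log\ell\ll \mathrm{Rad}_2(ABC)\log(\sqrt{p'}+1)\ll k\log k,
\end{align*}
which is the claimed bound. If no such $p'$ is available, I would invoke Lemma \ref{lem:Kraus fermat lemma} directly for coefficients supported on $\{2,3\}$ together with one prime of size $\asymp k$.
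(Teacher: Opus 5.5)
\textbf{There is a genuine gap in the endgame.} Your sieve step is essentially the paper's: a Brun/vector sieve over the small exceptional primes, trivial removal of the large ones, and Bombieri--Vinogradov for the main term. The trouble is what you do with the surviving $p$. The relation with $c=1,2,3$ gives
\begin{align*}
a_{i_0+p}x_{i_0+p}^\ell-2a_{i_0+2p}x_{i_0+2p}^\ell+a_{i_0+3p}x_{i_0+3p}^\ell=0.
\end{align*}
Two features of this equation cause the failure. First, $d$ does not occur in it. Second, one of the three terms is always divisible by $3$, so you cannot guarantee $3\nmid ABC$: if $3\mid n+i_0d^\ell$ then $3$ divides the $c=3$ term, and if not, $i_0+p$ and $i_0+2p$ cover both nonzero classes mod $3$.

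Neither of your two exits then works.
\begin{enumerate}
\item Lemma \ref{lem:modularity to bound exponent ell} needs an odd prime $p'$ of size $k^{O(1)}$ with $p'\nmid ABC$ and $p'$ dividing exactly one of $u,v,w$. Your candidate is a prime $p'\notin E$ dividing exactly one index $c$. It divides $(n+i_0d^\ell)+cpd^\ell$ to exponent $\min(v_{p'}(n+i_0d^\ell),v_{p'}(c))<\ell$, unless those two valuations coincide. So it lands in the coefficient $a_{i_0+cp}$, not in $x_{i_0+cp}$, which is the opposite of what the lemma requires. A prime is forced into some $x_j$ only when it divides exactly one term of the \emph{whole} product $n(n+d^\ell)\cdots(n+(k-1)d^\ell)$; that is the Bennett--Siksek mechanism, and nothing in your argument arranges it. Primes $\geq k$ dividing the $x_j$ have no size control.
\item Lemma \ref{lem:Kraus fermat lemma} needs $\mathrm{Rad}(ABC)\mid 2q$ for a single odd prime $q$ that is neither Mersenne nor Fermat. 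Your coefficients live on $\{2,3,p\}$, and cancelling $p$ leaves $q=3$, which is excluded. That exclusion matters here. Whenever $A-2B+C=0$ (e.g.\ $A=B=C=1$, or $(A,B,C)=(1,2,3)$), the equation $Au^\ell-2Bv^\ell+Cw^\ell=0$ has the solution $u=v=w=1$ for every $\ell$. No modular argument can bound $\ell$ without excluding such solutions, and with $d$ absent you have no handle to do so.
\end{enumerate}

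\textbf{The missing idea} is to use only \emph{two} terms of the progression, let $d^\ell$ itself be the third term, and arrange that $3$ never enters.
\begin{enumerate}
\item If $3\mid n+i_0d^\ell$, sieve $p\in(k/5,k/4)$ so that $n+(i_0+\varepsilon p)d^\ell$ and $n+(i_0+2\varepsilon p)d^\ell$ avoid every odd $q<k$ with $q\nmid n+i_0d^\ell$. Any odd prime $<k$ dividing either term then divides $n+i_0d^\ell$, hence divides $cp\,d^\ell$ with $c\in\{1,2\}$, hence equals $p$. So
\begin{align*}
a_{i_0+2\varepsilon p}x_{i_0+2\varepsilon p}^\ell-a_{i_0+\varepsilon p}x_{i_0+\varepsilon p}^\ell=\varepsilon p\,d^\ell
\end{align*}
has all coefficients supported on $\{2,p\}$.
\item If $3\nmid n+i_0d^\ell$, use $c\in\{1,3\}$ with $p\in(k/7,k/6)$. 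Restrict $p$ to a class mod $3$ making $3\nmid n+(i_0+\varepsilon p)d^\ell$; the $c=3$ term is automatically prime to $3$. This gives right-hand side $2\varepsilon p\,d^\ell$, again with coefficients on $\{2,p\}$.
\end{enumerate}
Now choose $p$ that is neither Mersenne nor Fermat; there are only $O(\log k)$ exceptions among the $\gg k/(\log k)^3$ sieved primes. The two $x_j$ and $d$ are pairwise coprime, and $d>1$ by Erd\H{o}s--Selfridge, which rules out $u,v,w=\pm1$. Lemma \ref{lem:Kraus fermat lemma} with $q=p$ then gives $\log\ell\le 2(p-1)\log(\sqrt{8(p+1)}+1)\ll k\log k$. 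This route needs no auxiliary prime and no $p$-adic case analysis.
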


Proposition \ref{prop:some term divisible by many primes < k} immediately applies in some cases, but, in other cases, it takes some nontrivial effort to apply it. The following subsection is devoted to proving some of the results we need in one of these knotty situations.

\subsection{Every prime divides two terms}

In this subsection, we assume that every prime $p \in (\frac{k}{2},k)$ divides two terms $n+i_pd^\ell,n+(i_p+p)d^\ell$. For a prime $p \in (\frac{k}{2},k)$, we write
\begin{align*}
S_p \coloneqq \{q < k : q \text{ prime}, q \mid (n+i_pd^\ell)(n+(i_p+p)d^\ell)\}.
\end{align*}
We assume further that $|S_p| \geq \frac{2}{3}\pi(k)$ for every $p \in (\frac{k}{2},k)$.

We may write $S_p$ as the disjoint union $S_p = A_p \cup B_p \cup C_p$, where
\begin{align*}
A_p &\coloneqq \{q < k : q \text{ prime}, q \mid (n+i_pd^\ell), q \nmid(n+(i_p+p)d^\ell)\}, \\
B_p &\coloneqq \{q < k : q \text{ prime}, q \mid (n+i_pd^\ell), q \mid(n+(i_p+p)d^\ell)\}, \\
C_p &\coloneqq \{q < k : q \text{ prime}, q \nmid (n+i_pd^\ell), q \mid(n+(i_p+p)d^\ell)\}.
\end{align*}
Observe that $p \in B_p$, by assumption. On the other hand, if $q \in B_p$, then $q$ divides the difference $(n+(i_p+p)d^\ell)-(n+i_pd^\ell) = pd^\ell$, and since $\gcd(n,d)=1$ we must have $q \mid p$, so $q=p$. It follows that $B_p = \{p\}$, and therefore
\begin{align}\label{eq:lower bound on size of Ap plus Cp}
\frac{2}{3}\pi(k)-1 \leq |S_p|-1 \leq |A_p\cup C_p| = |A_p| + |C_p|.
\end{align}

\begin{lemma}\label{lem:Sp large implies indices collide}
Let $p,r$ be two distinct primes in $(\frac{k}{2},k)$, and assume $|S_p|,|S_r| \geq \frac{2}{3}\pi(k)$. Then $i_p = i_r$, or $i_p+p=i_r+r$.
\end{lemma}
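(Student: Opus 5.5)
The plan is to show first that the index sets $\{i_p,i_p+p\}$ and $\{i_r,i_r+r\}$ must intersect, and then to rule out the two ``crossed'' ways they could intersect by a size argument.

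\textbf{Step 1: the index sets intersect.} Both $S_p$ and $S_r$ are subsets of the $\pi(k)$ primes below $k$, and each has size $\geq \frac{2}{3}\pi(k)$. Hence
\begin{align*}
|S_p \cap S_r| \geq \tfrac{1}{3}\pi(k).
\end{align*}
Suppose, for contradiction, that $\{i_p,i_p+p\}\cap\{i_r,i_r+r\}=\emptyset$. Take any odd prime $q \in S_p\cap S_r$. Then $q$ divides $n+ad^\ell$ for some $a\in\{i_p,i_p+p\}$ and $n+bd^\ell$ for some $b\in\{i_r,i_r+r\}$, with $a\neq b$. So $q$ divides the difference $(b-a)d^\ell$. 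Since $q<k$ is odd, the hypothesis gives $q\nmid d$, and therefore $q \mid (b-a)$.

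Here $0<|b-a|\leq k-1$, and there are at most four possible values of $b-a$. Any nonzero integer $m$ with $|m|<k$ satisfies $\omega(m)\ll \log k/\log\log k$. Allowing also for the prime $2$, we get
\begin{align*}
|S_p\cap S_r| \ll \frac{\log k}{\log\log k}.
\end{align*}
This contradicts $|S_p\cap S_r|\geq \frac13\pi(k)\gg k/\log k$ once $k$ is sufficiently large. So the two index sets share an element.

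\textbf{Step 2: exclude the crossed coincidences.} A shared element means one of four equalities holds: $i_p=i_r$, $i_p+p=i_r+r$, $i_p+p=i_r$, or $i_p=i_r+r$.

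If $i_p+p=i_r$, then $i_r+r = i_p+p+r$. Both $i_p$ and $i_r+r$ lie in $\{0,\ldots,k-1\}$, so
\begin{align*}
p+r = (i_r+r)-i_p \leq k-1.
\end{align*}
But $p,r>\frac{k}{2}$ gives $p+r>k$, a contradiction. The case $i_p=i_r+r$ is symmetric: it forces $(i_p+p)-i_r=p+r\leq k-1$, which is impossible for the same reason. This leaves $i_p=i_r$ or $i_p+p=i_r+r$, as claimed.

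\textbf{Main obstacle.} The only step needing real care is Step 1. One has to verify the divisibility $q\mid(b-a)$, which uses that $q$ is odd and below $k$, so $q\nmid d$. One also has to check that the resulting $O(\log k/\log\log k)$ bound really beats $\frac13\pi(k)$. Both points are routine for $k$ sufficiently large.
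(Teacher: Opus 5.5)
Your proof is correct and follows the paper's argument. The paper also excludes $i_p+p=i_r$ and $i_p=i_r+r$ using $p+r>k$, and it handles the disjoint-index case by inclusion--exclusion: any common prime divisor must divide one of the four nonzero index differences of absolute value $<k$. The differences are cosmetic: you take the two steps in the opposite order, you work with $S_p\cap S_r$ directly instead of first discarding $B_p=\{p\}$ and intersecting $A_p\cup C_p$ with $A_r\cup C_r$, and you treat the prime $2$ separately, which is unnecessary since $q\nmid d$ already follows from $\gcd(n,d)=1$.
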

\begin{proof}
We observe that we cannot have $i_p+p=i_r$ or $i_p=i_r+r$. Indeed, suppose $i_p+p=i_r$. Then
\begin{align*}
k > i_r+r > i_r + \frac{k}{2} = i_p+p + \frac{k}{2} > i_p+k \geq k,
\end{align*}
which is a contradiction. We similarly cannot have $i_p=i_r+r$. Hence, we have $i_p +p \neq i_r$, and $i_p \neq i_r+r$.

Now assume by way of contradiction that $i_p \neq i_r$ and $i_p+p \neq i_r+r$. By inclusion-exclusion, we have
\begin{align*}
|(A_p\cup C_p)\cap (A_r \cup C_r)| &= |A_p\cup C_p| + |A_r \cup C_r| - |(A_p\cup C_p)\cup (A_r \cup C_r)| \\
&\geq 2\left(\frac{2}{3}\pi(k)-1\right) - \pi(k) = \frac{1}{3}\pi(k)-2,
\end{align*}
where we have used \eqref{eq:lower bound on size of Ap plus Cp} to obtain a lower bound.

On the other hand, we have
\begin{align*}
(A_p\cup C_p)\cap (A_r \cup C_r) &= (A_p \cap A_r) \cup (A_p \cap C_r) \cup (C_p \cap A_r) \cup (C_p \cap C_r).
\end{align*}
Observe that
\begin{align*}
|A_p \cap A_r| &\leq \#\{q < k : q \mid (n+i_pd^\ell),q \mid (n+i_rd^\ell)\} \leq \#\{q < k : q \mid (i_p-i_r)\}.
\end{align*}
Since $i_p \neq i_r$, we see that $i_p-i_r$ is a nonzero integer with absolute value $<k$. Accordingly, the number of primes $<k$ that divide $i_p-i_r$ is $\ll \log k$. Similarly, we have
\begin{align*}
|A_p \cap C_r|, |C_p \cap A_r|,  |C_p \cap C_r| \ll \log k.
\end{align*}
Comparing our two expressions, we find
\begin{align*}
\pi(k) \ll \log k,
\end{align*}
and this is a contradiction since $k$ is sufficiently large.
\end{proof}

\begin{lemma}\label{lem:indices all align}
Assume $|S_p| \geq \frac{2}{3}\pi(k)$ for every $p \in (\frac{k}{2},k)$. Then $i_p=i_r$ for all primes $p,r \in (\frac{k}{2},k)$, or $i_p+p=i_r+r$ for all primes $p,r \in (\frac{k}{2},k)$.
\end{lemma}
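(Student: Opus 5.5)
We will derive Lemma \ref{lem:indices all align} from Lemma \ref{lem:Sp large implies indices collide} together with an argument that the two index maps $p \mapsto i_p$ and $p \mapsto i_p+p$ cannot both vary. By Lemma \ref{lem:Sp large implies indices collide}, every pair of distinct primes $p,r \in (\frac{k}{2},k)$ satisfies $i_p=i_r$ or $i_p+p=i_r+r$. Moreover, these two alternatives are mutually exclusive: if both held, subtracting would give $p=r$. So the complete graph on the primes in $(\frac{k}{2},k)$ has each edge coloured either ``left'' (when $i_p=i_r$) or ``right'' (when $i_p+p=i_r+r$).

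The first key step is to note that each colour class is an equivalence relation. Indeed, both $i_p=i_r$ and $i_p+p=i_r+r$ are equalities of a function of the prime, so each is transitive. Thus the primes split into left-classes, namely the fibres of $p\mapsto i_p$, and also into right-classes, namely the fibres of $p\mapsto i_p+p$.

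Two facts constrain these partitions. Every pair of distinct primes lies in a common left-class or a common right-class, and no pair lies in both. Consequently a left-class and a right-class meet in at most one prime.

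The second key step is a short combinatorial argument. Suppose neither conclusion of the lemma holds. Then there are at least two left-classes and at least two right-classes. Take $p,r$ in different left-classes; they must share a right-class $R$. Since there is more than one right-class, pick $s \notin R$. Now $s$ is not right-related to $p$ or to $r$, so it must be left-related to both. That forces $p$ and $r$ into the same left-class, which is a contradiction. Hence one of the two relations is total, and this is exactly the statement of the lemma.

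I expect no real obstacle here. The only points needing care are checking that the two alternatives in Lemma \ref{lem:Sp large implies indices collide} are exclusive and confirming that the hypothesis $|S_p|\ge \frac23\pi(k)$ holds for all pairs, so that the earlier lemma applies to every pair. The case where there is only one prime in the interval is vacuous, and for $k$ large there are many primes there anyway.
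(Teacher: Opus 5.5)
Your argument is correct and is essentially the paper's: both bring in a third prime and combine the transitivity of the relations $i_p=i_r$ and $i_p+p=i_r+r$ with Lemma~\ref{lem:Sp large implies indices collide} to reach a contradiction. The paper argues directly, anchoring on the two smallest primes $p_1<p_2$ in $(\frac{k}{2},k)$ and showing every other prime inherits their relation, whereas you argue by contradiction (and your version does not actually need the mutual exclusivity of the two alternatives).
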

\begin{proof}
Let $p_1 < p_2$ be the two smallest primes in $(\frac{k}{2},k)$. By Lemma \ref{lem:Sp large implies indices collide}, we have $i_{p_1}=i_{p_2}$ or $i_{p_1}+p_1 = i_{p_2} + p_2$.

Assume that $i_{p_1} = i_{p_2}$. Let $q \in (\frac{k}{2},k)$ be any prime with $q > p_2$. We claim that $i_q = i_{p_1}$, so assume for contradiction that $i_q \neq i_{p_1}$. By Lemma \ref{lem:Sp large implies indices collide}, we must then have $i_q + q = i_{p_1}+p_1$. However, we have $i_q \neq i_{p_2} = i_{p_1}$, and we must also have $i_q +q \neq i_{p_2} + p_2$, for if $i_q + q = i_{p_2}+p_2$ then $i_{p_1}+p_1 = i_q + q = i_{p_2}+p_2 = i_{p_1} + p_2$, and this implies $p_1 = p_2$. This contradicts Lemma \ref{lem:Sp large implies indices collide}, so we must have $i_q = i_{p_1}$.

We argue similarly in the case $i_{p_1} + p_1 = i_{p_2} + p_2$.
\end{proof}

Lemma \ref{lem:indices all align} says that the indices $i_p$ must all be equal, or the indices $i_p+p$ must all be equal. With the indices colliding in this fashion, we can find some index $i \in \{i_p,i_p+p\}$ where $n+id^\ell$ is not divisible by many primes $<k$. To see this, we define a set of indices $\mathcal{R}$ by
\begin{align*}
\mathcal{R} = \left\{i_p+p : p \in \left( \frac{k}{2},k\right) \right\}
\end{align*}
if $i_p=i_r$ for all $p,r \in (\frac{k}{2},k)$, and 
\begin{align*}
\mathcal{R} = \left\{i_p : p \in \left( \frac{k}{2},k\right) \right\}
\end{align*}
if $i_p+p=i_r+r$ for all $p,r \in (\frac{k}{2},k)$. Observe that all the elements of $\mathcal{R}$ are distinct by Lemma \ref{lem:indices all align}, and $|\mathcal{R}| \sim \frac{k}{2\log k}$ by the prime number theorem.

\begin{lemma}\label{lem:some j in R not divisible by many primes}
If $k$ is sufficiently large, there is some $j \in \mathcal{R}$ such that
\begin{align*}
\#\{p < k : p\textup{ prime}, p \mid(n+jd^\ell)\} \leq 3(\log k)(\log \log k).
\end{align*}
\end{lemma}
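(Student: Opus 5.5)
The plan is a double-counting (averaging) argument over the set $\mathcal{R}$. We count pairs $(j,q)$ with $j \in \mathcal{R}$, $q<k$ prime and $q \mid (n+jd^\ell)$, bound the number of such pairs from above, and divide by $|\mathcal{R}|$. The minimum over $j\in\mathcal{R}$ of $\#\{q<k : q \mid (n+jd^\ell)\}$ is at most this average, so it suffices to show the average is $\leq 3(\log k)(\log\log k)$ for $k$ large.

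\textbf{Step 1: odd primes.} Fix an odd prime $q<k$. Since $q\nmid d$, the standing fact from the start of this section says that $q \mid (n+jd^\ell)$ holds if and only if $j \equiv i_q \pmod q$. The elements of $\mathcal{R}$ are distinct integers in $\{0,1,\ldots,k-1\}$, so at most $\frac{k}{q}+1$ of them lie in the residue class $i_q \bmod q$. Hence $q$ divides $n+jd^\ell$ for at most $\frac{k}{q}+1$ indices $j\in\mathcal{R}$.

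\textbf{Step 2: the prime $2$.} Here $d$ might be even, so the residue-class description may fail. In the worst case $2$ divides every term $n+jd^\ell$ with $j\in\mathcal{R}$, which contributes at most $|\mathcal{R}|$ pairs. After averaging this adds at most $1$, which is harmless.

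\textbf{Step 3: summing and averaging.} Combining the two steps, the number of pairs is at most
\begin{align*}
|\mathcal{R}| + \sum_{q<k} \left(\frac{k}{q}+1\right) = |\mathcal{R}| + k\log\log k + O(k) + \pi(k),
\end{align*}
using Mertens' theorem $\sum_{q<k} 1/q = \log\log k + O(1)$. By the prime number theorem, $|\mathcal{R}| \sim \frac{k}{2\log k}$ and $\pi(k)\sim \frac{k}{\log k}$. Dividing by $|\mathcal{R}|$ gives an average of
\begin{align*}
(2+o(1))(\log k)(\log\log k) + O(\log k),
\end{align*}
which is $\leq 3(\log k)(\log\log k)$ once $k$ is sufficiently large. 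Some $j\in\mathcal{R}$ attains at most the average, which proves the lemma.

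The argument is essentially routine. The only points needing care are the following.
\begin{itemize}
\item One must justify that the elements of $\mathcal{R}$ are distinct. This is recorded just before the lemma as a consequence of Lemma \ref{lem:indices all align}.
\item The prime $2$ must be handled separately, as in Step 2.
\item The constant must be tracked to get $2+o(1)<3$. For this it matters that $|\mathcal{R}|$ is asymptotically $\frac{k}{2\log k}$ rather than smaller, so the leading constant has slack.
\end{itemize}
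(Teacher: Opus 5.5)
Your proposal is correct and is essentially the paper's proof: you double-count pairs $(j,q)$, bound the contribution of each prime by $k/q+O(1)$ using the residue class $i_q \bmod q$, apply Mertens' theorem, and divide by $|\mathcal{R}|\sim k/(2\log k)$ to get an average of $(2+o(1))(\log k)(\log\log k)$. Your separate treatment of $q=2$ is extra care the paper skips, and the crude bound is more than you need: if $d$ is even then $n$ is odd, so $2$ divides no term $n+jd^\ell$, and if $d$ is odd the residue-class description applies to $2$ as well.
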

\begin{proof}
By double-counting, we have
\begin{align*}
\sum_{j \in \mathcal{R}} &\#\{p < k :p\text{ prime}, p \mid(n+jd^\ell)\}=\sum_{j \in \mathcal{R}}\sum_{\substack{p < k \\ j \equiv i_p (p)}} 1 = \sum_{p < k} \sum_{\substack{j \in \mathcal{R} \\ j \equiv i_p(p)}} 1 \\
&\leq \sum_{p < k} \sum_{\substack{0 \leq j < k \\ j \equiv i_p (p)}} 1 = \sum_{p < k} \left(\frac{k}{p} + O(1) \right) = k \log \log k + O \left(k\right),
\end{align*}
the last equality following from Mertens' theorem. Therefore, there exists some $j \in \mathcal{R}$ such that
\begin{align*}
\#\{p < k :p\text{ prime}, p \mid(n+jd^\ell)\} &\leq (1+o(1))\frac{k \log \log k}{|\mathcal{R}|} \leq 3(\log k)(\log \log k)
\end{align*}
when $k$ is sufficiently large, since $|\mathcal{R}| = (1+o(1))\frac{k}{2\log k}$.
\end{proof}

By Lemma \ref{lem:some j in R not divisible by many primes}, there exists some $j_0 \in \mathcal{R}$ such that 
\begin{align*}
\#\{p < k : p\textup{ prime}, p \mid(n+j_0d^\ell)\} \leq 3(\log k)(\log \log k).
\end{align*}
Since $j_0 = i_{p_0}$ or $j_0 = i_{p_0}+p_0$ for some $p_0 \in (\frac{k}{2},k)$, we have $|A_{p_0}| \leq 3(\log k)(\log \log k)$ or $|C_{p_0}| \leq 3(\log k)(\log \log k)$. By \eqref{eq:lower bound on size of Ap plus Cp}, we then see that
\begin{align*}
\max(|A_{p_0}|,|C_{p_0}|) \geq |S_{p_0}| - 3(\log k)(\log \log k)-1.
\end{align*}

Thus, we have proved the following lemma.

\begin{lemma}\label{lem:every prime divides two, some single term has many prime divisors}
Assume $|S_p| \geq \frac{2}{3}\pi(k)$ for every prime $p \in (\frac{k}{2},k)$. There exists some $p \in (\frac{k}{2},k)$ such that
\begin{align*}
\max(|A_p|,|C_p|) \geq |S_p| - 3(\log k)(\log \log k)-1.
\end{align*}
\end{lemma}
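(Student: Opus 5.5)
The lemma is essentially already established by the discussion preceding it, so the plan is to assemble those pieces. Assume $|S_p| \geq \frac{2}{3}\pi(k)$ for every prime $p \in (\frac{k}{2},k)$.

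The first step is to apply Lemma \ref{lem:indices all align}. It tells us that either all the indices $i_p$ coincide, or all the indices $i_p+p$ coincide. We then define the index set $\mathcal{R}$ as the set of the \emph{other} endpoints: $\mathcal{R}=\{i_p+p\}$ in the first case and $\mathcal{R}=\{i_p\}$ in the second. In either case the elements of $\mathcal{R}$ are distinct, because the primes $p$ are distinct. Hence $|\mathcal{R}| \sim \frac{k}{2\log k}$ by the prime number theorem.

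The second step is to invoke Lemma \ref{lem:some j in R not divisible by many primes}. This is an averaging argument: by Mertens' theorem, the total number of pairs (prime $q<k$, index $j<k$) with $q \mid n+jd^\ell$ is $k\log\log k+O(k)$. Dividing by $|\mathcal{R}|$ produces some $j_0\in\mathcal{R}$ such that $n+j_0d^\ell$ has at most $3(\log k)(\log\log k)$ prime divisors $<k$.

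The third step is to transfer this bound to the sets $A_p, C_p$. By construction, $j_0=i_{p_0}$ or $j_0=i_{p_0}+p_0$ for some prime $p_0\in(\frac{k}{2},k)$. Every element of $A_{p_0}$ divides $n+i_{p_0}d^\ell$, and every element of $C_{p_0}$ divides $n+(i_{p_0}+p_0)d^\ell$. So in the first case $|A_{p_0}|\le 3(\log k)(\log\log k)$, and in the second $|C_{p_0}|\le 3(\log k)(\log\log k)$.

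The fourth step uses the decomposition $S_{p_0}=A_{p_0}\sqcup\{p_0\}\sqcup C_{p_0}$, which comes from $B_{p_0}=\{p_0\}$. This gives $|A_{p_0}|+|C_{p_0}|=|S_{p_0}|-1$. Therefore the larger of the two satisfies
\[
\max(|A_{p_0}|,|C_{p_0}|)\ge |S_{p_0}|-1-3(\log k)(\log\log k),
\]
and taking $p=p_0$ proves the lemma.

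I expect no real obstacle here. The only point needing care is the bookkeeping in the third step: making sure the small set is the one attached to the non-colliding index $j_0\in\mathcal{R}$, so that the bound falls on $A_{p_0}$ or $C_{p_0}$ as appropriate.
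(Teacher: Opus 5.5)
Your proposal is correct and follows the paper's own argument essentially step for step: Lemma~\ref{lem:indices all align} to build $\mathcal{R}$, then the Mertens averaging over $\mathcal{R}$ to find $j_0$ with at most $3(\log k)(\log\log k)$ prime divisors below $k$, then the identity $|A_{p_0}|+|C_{p_0}|=|S_{p_0}|-1$ (from $B_{p_0}=\{p_0\}$) to force the other set to be large. One cosmetic point: the pair count in your second step is only an upper bound, $\le k\log\log k+O(k)$, not an equality, but an upper bound is all the averaging needs.
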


\subsection{Proof of Proposition \ref{prop:key prop d not divisible by any small primes}}

With Lemma \ref{lem:every prime divides two, some single term has many prime divisors} in hand, we are ready to prove Proposition \ref{prop:key prop d not divisible by any small primes}, assuming Proposition \ref{prop:some term divisible by many primes < k}.

\begin{proof}[Proof of Proposition \ref{prop:key prop d not divisible by any small primes} assuming Proposition \ref{prop:some term divisible by many primes < k}]
Assume $p \nmid d$ for every odd prime $p < k$. We set 
\begin{align*}
B = k^{1/(100 \log \log k)},
\end{align*}
and now consider several cases.

Assume there exists a prime $p \in (\frac{k}{2},k)$ such that $p$ divides only a single term $n+i_pd^\ell$. Note that this implies $p \mid x_{i_p}$. 

If $n+i_pd^\ell$ is divisible by $>\pi(k)-B$ of the primes $<k$, then we may apply Proposition \ref{prop:some term divisible by many primes < k} to bound $\ell$. 

Therefore, we may assume $n+i_pd^\ell$ is divisible by $\leq \pi(k)-B$ of the primes $<k$. Thus, $a_{i_p}$ is also divisible by $\leq \pi(k)-B$ of the primes $<k$. For any $0 \leq j \leq k-1$ with $j \neq i_p$, we subtract $n+i_pd^\ell = a_{i_p}x_{i_p}^\ell$ and $n+jd^\ell = a_jx_j^\ell$ and rearrange to obtain
\begin{align*}
a_{i_p}x_{i_p}^\ell + a_j(-x_j)^\ell + (j-i_p)d^\ell = 0.
\end{align*}

Observe that $p \nmid x_jd$, and $p \nmid a_{i_p}a_j(j-i_p)$. Therefore, by Lemma \ref{lem:modularity to bound exponent ell}, we have
\begin{align*}
\log \ell \leq \frac{32\textup{Rad}_2(a_{i_p}a_j|j-i_p|)+1}{6} \log(\sqrt{k}+1) .
\end{align*}
We use the trivial inequalities $\text{Rad}_2(abc) \leq \text{Rad}_2(a)\cdot \text{Rad}_2(b) \cdot \text{Rad}_2(c)$ and $\text{Rad}_2(n) \leq n$ to get
\begin{align*}
\textup{Rad}_2(a_{i_p}a_j|j-i_p|) \leq ka_j \text{Rad}_2(a_{i_p}).
\end{align*}

By Lemma \ref{lem:good set I to control ai}, there exists some $j \in I\backslash \{i_p\}$ such that
\begin{align*}
a_j \leq (k!)^{1/|I\backslash \{i_p\}|} \leq (k!)^{\frac{1}{k}(1+O(\frac{1}{\log k}))} \ll k,
\end{align*}
the last inequality following from Stirling's formula. For this choice of $j$, we have
\begin{align*}
\textup{Rad}_2(a_{i_p}a_j|j-i_p|) \ll k^2 \text{Rad}_2(a_{i_p}).
\end{align*}
Since $a_{i_p}$ is divisible by $\leq \pi(k)-B$ primes $<k$, we have the (somewhat crude) bound
\begin{align*}
\text{Rad}_2(a_{i_p}) &\leq \frac{\prod_{q \leq k} q}{\prod_{q \leq B} q} = \frac{k\#}{B\#},
\end{align*}
which establishes the proposition in this case (and, in fact, the result is slightly stronger).

On the other hand, assume that every prime $p \in (\frac{k}{2},k)$ divides two terms $n+i_p d^\ell,n+(i_p+p)d^\ell$. If every such $p$ has
\begin{align*}
\#\{q < k : q \text{ prime}, q \mid (n+i_pd^\ell)(n+(i_p+p)d^\ell)\} &> \pi(k)-B,
\end{align*}
then since $B = o(\pi(k))$ Lemma \ref{lem:every prime divides two, some single term has many prime divisors} implies there is some $0 \leq i_0 \leq k-1$ such that $n+i_0d^\ell$ is divisible by $>\pi(k)-B-3(\log k)(\log \log k)-1$ primes $<k$. We can then appeal to Proposition \ref{prop:some term divisible by many primes < k} to conclude. 

Therefore, we may assume there is some $p \in (\frac{k}{2},k)$ such that
\begin{align*}
\#\{q < k : q \text{ prime}, q \mid (n+i_pd^\ell)(n+(i_p+p)d^\ell)\} &\leq \pi(k)-B.
\end{align*}
For any positive integers $r,s$ with $r+s=p$, we have
\begin{align*}
(n+i_pd^\ell)(n+(i_p+p)d^\ell) - (n+(i_p+r)d^\ell)(n+(i_p+s)d^\ell) + rs d^{2\ell} = 0,
\end{align*}
and this yields the generalized Fermat equation
\begin{align*}
a_{i_p}a_{i_p+p}(x_{i_p}x_{i_p+p})^\ell + a_{i_p+r}a_{i_p+s}(-x_{i_p+r}x_{i_p+s})^\ell + rs (d^2)^\ell = 0.
\end{align*}
We observe $p \mid x_{i_p}x_{i_p+p}$, and $p \nmid a_{i_p}a_{i_p+p}a_{i_p+r}a_{i_p+s}rsx_{i_p+r}x_{i_p+s}d$. Thus, by Lemma \ref{lem:modularity to bound exponent ell}, we obtain
\begin{align*}
\log \ell &\leq \frac{32\textup{Rad}_2(a_{i_p}a_{i_p+p}a_{i_p+r}a_{i_p+s}rs)+1}{6} \log(\sqrt{k}+1).
\end{align*}

We trivially have $rs \leq k^2$, and since $a_{i_p}a_{i_p+p}$ is divisible by $\leq \pi(k)-B$ of the primes $<k$, we have
\begin{align*}
\text{Rad}_2(a_{i_p}a_{i_p+p}) \leq \frac{k\#}{B\#}.
\end{align*}

It remains to show there is a choice of $r$ and $s$ such that $a_{i_p+r}a_{i_p+s} \ll k^4$. The argument we employ for this purpose is similar to that used in the proof of Proposition \ref{prop:key prop d divisible by a small prime}. 

Since $r+s=p$, we have $s=p-r$, so $i_p+s=i_p+p-r$. If $r < \frac{p}{2}$, then $i_p+r < i_p+p-r$. We accordingly define sets $Q_r = \{i_p+r,i_p+p-r\}$ for $1 \leq r < \lfloor \frac{p}{2}\rfloor$, and observe that these $\frac{p}{2} + O(1)$ sets, each with two elements, are pairwise disjoint. There are $\leq \pi(k)$ choices of $r$ where $i_p +r \not \in I$ or $i_p+p-r \not \in I$. Therefore, by Lemma \ref{lem:good set I to control ai}, there exists some $r$ such that
\begin{align*}
a_{i_p+r}a_{i_p+p-r} &\leq (k!)^{\frac{2}{p}(1+O(\frac{1}{\log k}))} \leq (k!)^{\frac{4}{k}(1+O(\frac{1}{\log k}))} \ll k^4. \qedhere
\end{align*}
\end{proof}

\begin{remark}\label{rmk:sharper bennett siksek theorem}
The proof of \cite[Theorem 1]{BS2016} also splits into cases depending on whether $d$ is divisible by a prime in $(\frac{k}{2},k)$ or not, and, in the latter case, whether $p \in (\frac{k}{2},k)$ divides a single term or two terms. Upon appealing to Lemma \ref{lem:modularity to bound exponent ell} and bounding
\begin{align*}
\textup{Rad}_2(a_{i_p}a_j|j-i_p|),\textup{Rad}_2(a_{i_p}a_{i_p+p}a_{i_p+r}a_{i_p+s}rs) \leq \prod_{\substack{2 < q \leq k \\ q \neq p}} q,
\end{align*}
one obtains
\begin{align*}
\log \ell &\leq (1+o(1)) \frac{1}{2}\log k \cdot \frac{16}{3}\prod_{\substack{2 < q \leq k \\ q \neq p}} q = (1+o(1)) \frac{8}{3}\cdot \frac{\log k}{2p} \cdot k\# \\ 
&\leq (1+o(1)) \frac{8}{3}\cdot \frac{\log k}{k} \cdot k\#.
\end{align*}
Bennett and Siksek obtain their simpler bound $\ell < \exp(3^k)$ by using work of Schoenfeld \cite[p. 360]{Sch1976} to deduce
\begin{align*}
\prod_{\substack{2 < q \leq k \\ q \neq p}} q \leq \frac{1}{k} \prod_{q \leq k} q \leq \frac{1}{k} \exp(1.000081k),
\end{align*}
and then simplifying with the fact that one can take $k > 34$.
\end{remark}

\section{Proof of Proposition \ref{prop:some term divisible by many primes < k}}\label{sec:sieve methods}

We first collect some results we need for the proof of Proposition \ref{prop:some term divisible by many primes < k}.

\subsection{Ancillary results}

Let $\mathcal{Q}$ be a finite set of primes, and let $a_q \pmod{q}$ be a residue class for each $q \in \mathcal{Q}$. We write
\begin{align*}
P(\mathcal{Q}) = \prod_{q \in \mathcal{Q}} q.
\end{align*}
For $n \in \mathbb{N}$, define
\begin{align*}
F(n) = F(n,\mathcal{Q}) \coloneqq \mathbf{1}(n \not \equiv a_q \pmod{q} \text{ for any }q \in \mathcal{Q}).
\end{align*}

We study $F(n)$ using sieve methods. For simplicity, we work with basic Brun-type sieves (see \cite[Chapter 6]{FI2010}), but one could obtain better results by using more advanced sieves, such as the combinatorial beta-sieve (see \cite[Chapter 11]{FI2010}).

If $e$ is a squarefree integer composed entirely of primes in $\mathcal{Q}$, we let $\gamma_e \pmod{e}$ denote the congruence class arising via the Chinese remainder theorem such that $\gamma_e \equiv a_q \pmod{q}$ for every $q \mid \mathcal{Q}$. For $n,m \in \mathbb{N}$, we define
\begin{align*}
F^-(n) &\coloneqq \sum_{\substack{e \mid P(\mathcal{Q}) \\ n \equiv \gamma_e (e) \\ \omega(e) \leq 2m+1}} \mu(e), \\
F^+(n) &\coloneqq\sum_{\substack{e \mid P(\mathcal{Q}) \\ n \equiv \gamma_e (e) \\ \omega(e) \leq 2m}} \mu(e).
\end{align*}
We note that $F^\pm(n)$ depends on $\mathcal{Q}$, the residue classes $a_q$, and the integer $m$, but we omit this from the notation. We shall use $F^\pm (n)$ on integers of size $\ll k$, and set 
\begin{align*}
  m = \lfloor 10 \log \log k \rfloor.
\end{align*}

\begin{lemma}\label{lem:brun sieve inequality}
For any $n \in \mathbb{N}$ we have $F^-(n) \leq F(n) \leq F^+(n)$.
\end{lemma}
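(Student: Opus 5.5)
Fix $n$ and set $\mathcal{Q}_n = \{q \in \mathcal{Q} : n \equiv a_q \pmod{q}\}$ and $r = |\mathcal{Q}_n|$. The plan is to reduce both inequalities to the classical Bonferroni inequalities for truncated binomial sums.

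First, rewrite $F^\pm(n)$ in terms of $\mathcal{Q}_n$. For a squarefree $e \mid P(\mathcal{Q})$, the Chinese remainder theorem gives $n \equiv \gamma_e \pmod{e}$ if and only if $n \equiv a_q \pmod{q}$ for every $q \mid e$, that is, if and only if $e \mid P(\mathcal{Q}_n)$. Grouping the divisors by $\omega(e) = j$ and noting $\mu(e) = (-1)^j$, we get
\begin{align*}
F^-(n) = \sum_{j=0}^{2m+1} (-1)^j \binom{r}{j}, \qquad F^+(n) = \sum_{j=0}^{2m} (-1)^j \binom{r}{j}.
\end{align*}
Also $F(n) = \mathbf{1}(r = 0)$.

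If $r = 0$, both truncated sums reduce to the $j=0$ term, so both equal $1 = F(n)$.

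If $r \geq 1$, then $F(n) = 0$. We use the standard identity
\begin{align*}
\sum_{j=0}^{J} (-1)^j \binom{r}{j} = (-1)^J \binom{r-1}{J},
\end{align*}
which follows by induction on $J$ from Pascal's rule $\binom{r}{j} = \binom{r-1}{j} + \binom{r-1}{j-1}$, since the sum telescopes. Taking $J = 2m$, which is even, gives $F^+(n) = \binom{r-1}{2m} \geq 0 = F(n)$. Taking $J = 2m+1$, which is odd, gives $F^-(n) = -\binom{r-1}{2m+1} \leq 0 = F(n)$. If $J \geq r$, the binomial coefficient is $0$ and equality holds, consistent with the full sum $\sum_j (-1)^j \binom{r}{j} = (1-1)^r = 0$.

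There is no real obstacle here. The only point needing care is the CRT identification of the condition $n \equiv \gamma_e \pmod{e}$ with $e \mid P(\mathcal{Q}_n)$, which depends on $e$ being squarefree with prime factors in $\mathcal{Q}$. The statement's ``$q \mid \mathcal{Q}$'' should be read as $q \mid e$.
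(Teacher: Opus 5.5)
Your proof is correct and follows the paper's own argument: restrict to the primes $q$ with $n \equiv a_q \pmod q$, rewrite each truncated M\"obius sum as $\sum_{j \le t}(-1)^j\binom{r}{j} = (-1)^t\binom{r-1}{t}$, and read off the sign from the parity of $t$. The only difference is that you spell out the CRT identification $n \equiv \gamma_e \pmod e \iff e \mid P(\mathcal{Q}_n)$ and the Pascal-rule telescoping, which the paper leaves brief.
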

\begin{proof}
This is a Brun-sieve result of standard type, but we give a brief proof for completeness.

If $F(n) = 1$, then $n \not \equiv \gamma_e \pmod{e}$ for any $e \mid P(\mathcal{Q})$ with $e>1$, so $F^-(n)=F(n)=F^+(n)=1$.

Assume $F(n) = 0$, and define $\mathcal{Q}' = \mathcal{Q}'(n) = \{q \in \mathcal{Q} : n \equiv a_q \pmod{q}\}$, so that $|\mathcal{Q}'| \geq 1$. For any positive integer $t$, we then have
\begin{align*}
\sum_{\substack{e \mid P(\mathcal{Q}) \\ n \equiv \gamma_e (e) \\ \omega(e) \leq t}} \mu(e) &= \sum_{\substack{e \mid P(\mathcal{Q'}) \\ \omega(e) \leq t}} \mu(e) = \sum_{j=0}^t (-1)^j {{|\mathcal{Q}'|} \choose j} = (-1)^t {{|\mathcal{Q}'|-1}\choose t},
\end{align*}
the last equality following from the binomial theorem and the identity ${n \choose k} + {n\choose {k+1}} =  {{n+1}\choose {k+1}}$ (compare \cite[(6.6)]{FI2010}). If $t$ is even then the sum is $\geq 0$, and if $t$ is odd the sum is $\leq 0$. (These are versions of the so-called ``Bonferroni inequalities,'' see, e.g., \cite{Gal1977}).
\end{proof}

We need a lower bound for terms of the form $F(n_1)F(n_2)$. This is accomplished with a ``vector sieve'' inequality (see \cite[p. 94]{BF1994}).

\begin{lemma}\label{lem:vector sieve}
For any positive integers $n_1,n_2$ we have
\begin{align*}
F(n_1)F(n_2) &\geq F^+(n_1)F^-(n_2) + F^-(n_1)F^+(n_2) - F^+(n_1)F^+(n_2).
\end{align*}
\end{lemma}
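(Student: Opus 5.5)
The inequality only involves the values $F(n_1),F(n_2)\in\{0,1\}$ and the bounds from Lemma \ref{lem:brun sieve inequality}. So I will prove it by a short case analysis, using $F^-(n_i)\le F(n_i)\le F^+(n_i)$ for $i=1,2$. For brevity write $F_i=F(n_i)$ and $F_i^\pm=F^\pm(n_i)$.

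I start from the algebraic identity
\begin{align*}
F_1F_2 - \bigl(F_1^+F_2^- + F_1^-F_2^+ - F_1^+F_2^+\bigr) = F_1F_2 - F_1^+F_2^+ + F_1^+(F_2^+-F_2^-) + (F_1^+ - F_1^-)F_2^+ - (F_1^+-F_1^-)(F_2^+-F_2^-),
\end{align*}
and check its nonnegativity case by case. A more transparent route, which I will actually follow, is the standard rewriting
\begin{align*}
F_1F_2 - F_1^+F_2^- - F_1^-F_2^+ + F_1^+F_2^+ = (F_1^+ - F_1)(F_2^+ - F_2) + F_1(F_2 - F_2^-) + F_2(F_1-F_1^-) + \bigl(F_1^+F_2 + F_1F_2^+ - F_1^+F_2 - F_1F_2^+\bigr) + \ldots
\end{align*}
I will expand this carefully so that the left side equals
\begin{align*}
(F_1^+-F_1)(F_2^+-F_2) + F_1^+(F_2-F_2^-) + F_2^+(F_1-F_1^-) - (F_1^+-F_1)F_2 \ldots
\end{align*}
Rather than chase a clean identity, the cleanest proof is direct casework. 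The key point is that $F_i^+\ge F_i\ge 0$ always, since $F^+\ge F\ge 0$.

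\emph{Case $F_1=F_2=1$.} Here the proof of Lemma \ref{lem:brun sieve inequality} gives $F_i^\pm=1$, so the right side is $1+1-1=1=F_1F_2$.

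\emph{Case $F_1=1$, $F_2=0$.} Then $F_1^\pm=1$, and the right side becomes $F_2^-+F_2^+-F_2^+=F_2^-\le F_2=0$. The symmetric case $F_1=0$, $F_2=1$ is identical.

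\emph{Case $F_1=F_2=0$.} Here $F_i^+\ge 0\ge F_i^-$. The right side is $F_1^+F_2^- + F_1^-F_2^+ - F_1^+F_2^+$. Each of the three terms is $\le 0$: the first two are (nonnegative)$\times$(nonpositive), and the third is minus a product of nonnegatives. So the right side is $\le 0 = F_1F_2$.

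This covers all cases. There is no real obstacle; the only point needing care is to recall from the proof of Lemma \ref{lem:brun sieve inequality} that $F^\pm(n)=1$ exactly when $F(n)=1$. When $F(n)=0$, that proof gives $F^+(n)\ge0$ and $F^-(n)\le 0$, which is exactly what the last case uses.
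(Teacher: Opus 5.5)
Your case analysis is correct. Since $F$ is an indicator, the cases are exhaustive, and in each one you use only $F^-(n)\le F(n)\le F^+(n)$ together with the fact, established in the proof of Lemma \ref{lem:brun sieve inequality}, that $F^\pm(n)=1$ whenever $F(n)=1$.

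The paper takes a different route: it expands $(F^+(n_1)-F(n_1))(F^+(n_2)-F(n_2))\ge 0$, then uses $F^+\ge 0$ and $F\ge F^-$ to lower $F^+(n_1)F(n_2)$ to $F^+(n_1)F^-(n_2)$, and symmetrically for the other cross term. In your notation this is the identity
\[
F_1F_2-\bigl(F_1^+F_2^-+F_1^-F_2^+-F_1^+F_2^+\bigr)=(F_1^+-F_1)(F_2^+-F_2)+F_1^+(F_2-F_2^-)+F_2^+(F_1-F_1^-),
\]
where each of the three terms is visibly nonnegative. Your third display begins with exactly these three terms; the trailing $-(F_1^+-F_1)F_2\ldots$ should simply have been dropped.

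What each approach buys:
\begin{itemize}
\item The paper's argument is shorter and more general. It needs only $F\ge 0$ and the sandwich $F^-\le F\le F^+$, so it holds verbatim for any upper and lower sieve weights, such as the beta-sieve weights the paper mentions as an alternative.
\item It does not use that $F$ is $\{0,1\}$-valued, nor the exact values of $F^\pm$ where $F=1$.
\item Your casework is more hands-on but is tied to the indicator structure.
\end{itemize}

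In a final write-up, delete the opening displays. Your first ``identity'' is actually false: its two sides differ by $(F_1^+-F_1^-)(F_2^+-F_2^-)$. The next two displays trail off with $\ldots$. None of this is used in your actual proof, but it should not be left in.
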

\begin{proof}
By Lemma \ref{lem:brun sieve inequality}, we have $(F^+(n_1)-F(n_1))(F^+(n_2)-F(n_2)) \geq 0$. Rearranging gives
\begin{align*}
F(n_1)F(n_2) &\geq F^+(n_1)F(n_2) + F(n_1)F^+(n_2) - F^+(n_1)F^+(n_2).
\end{align*}
Since $F^+$ is nonnegative, we have $F^+(n_1)F(n_2) \geq F^+(n_1)F^-(n_2)$, and we argue similarly for $F(n_1)F^+(n_2)$.
\end{proof}

We must set up some notation in order to state the next result. We will have $0 \leq i_0 \leq k-1$ such that $n+i_0d^\ell$ is divisible by many primes $<k$. Let $J_1 = (\frac{k}{5},\frac{k}{4})$. If $i_0 \leq \frac{k}{2}$ and $p \in J_1$, then $i_0 + p,i_0+2p < k$. If $i_0 > \frac{k}{2}$, then $i_0-p,i_0-2p > 0$. We set $\varepsilon_1 = 1$ if $i_0 \leq \frac{k}{2}$, and $\varepsilon_1 = -1$ if $i_0 > \frac{k}{2}$. 

We let $\mathcal{Q}_1$ be the set of all odd primes $<k$ that do not divide $n+i_0d^\ell$, and for each $q \in \mathcal{Q}_1$ we have a corresponding residue class $a_q \pmod{q}$, where $q \mid (n+id^\ell)$ if and only if $i \equiv a_q \pmod{q}$. Note that if $q \in \mathcal{Q}_1$, then $i_0 \not \equiv a_q \pmod{q}$.

\begin{lemma}\label{lem:many primes when 3 divides n+id ell}
Let $0 \leq i_0 \leq k-1$ such that $n+i_0d^\ell$ is divisible by $>\pi(k)-100k^{1/(100\log \log k)}$ primes $<k$. Assume $3 \mid (n+i_0d^\ell)$. If $k$ is sufficiently large, then
\begin{align*}
\sum_{p \in J_1} F(i_0 + \varepsilon_1 p,\mathcal{Q}_1)F(i_0 + 2\varepsilon_1p,\mathcal{Q}_1) \gg \frac{k}{(\log k)^3}.
\end{align*}
\end{lemma}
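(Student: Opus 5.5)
The plan is to lower-bound the sum by the vector sieve (Lemma \ref{lem:vector sieve}). After expanding, the problem becomes counting primes $p\in J_1$ in arithmetic progressions, which I would control with the Bombieri--Vinogradov theorem; a Brun-type truncation argument then handles the main term. Two structural facts drive everything. First, by hypothesis $|\mathcal{Q}_1| \leq 100B$ with $B=k^{1/(100\log\log k)}$, so $\mathcal{Q}_1$ is very small. Second, since $2\notin\mathcal{Q}_1$ by definition and $3\notin \mathcal{Q}_1$ because $3\mid (n+i_0d^\ell)$, every $q\in\mathcal{Q}_1$ satisfies $q\geq 5$.

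\textbf{Step 1: the forbidden classes.} For $q\in\mathcal{Q}_1$ we have $i_0\not\equiv a_q \pmod q$. Hence
\begin{align*}
i_0+\varepsilon_1 p\equiv a_q \pmod q &\iff p\equiv \varepsilon_1(a_q-i_0)\pmod q,\\
i_0+2\varepsilon_1 p\equiv a_q \pmod q &\iff p\equiv \varepsilon_1(a_q-i_0)\bar 2\pmod q.
\end{align*}
These are two \emph{distinct nonzero} residue classes mod $q$. Thus among the $q-1$ reduced classes, exactly $q-3\geq 2$ remain admissible. This is exactly where $q\geq 5$, and hence the hypothesis $3\mid(n+i_0d^\ell)$, is used: for $q=3$ nothing would survive.

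\textbf{Step 2: vector sieve expansion.} Apply Lemma \ref{lem:vector sieve} with $n_1=i_0+\varepsilon_1p$, $n_2=i_0+2\varepsilon_1p$ and sum over $p\in J_1$. Each of the three products expands as
\begin{align*}
\sum_{e_1,e_2\mid P(\mathcal{Q}_1)}\mu(e_1)\mu(e_2)\,\#\{p\in J_1 : p\equiv c_1 \ (e_1),\ p\equiv c_2\ (e_2)\},
\end{align*}
with $\omega(e_i)\leq 2m+1$. If a prime $q$ divides $\gcd(e_1,e_2)$, the two congruences force $p$ into two distinct classes mod $q$, so the count is zero. We may therefore take $e_1,e_2$ coprime, giving one reduced class mod $e_1e_2$. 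Primes $p\in J_1$ that divide $e_1e_2$ number $O(|\mathcal{Q}_1|)$ per pair and contribute negligibly. The modulus satisfies $e_1e_2\leq B^{4m+2}\leq k^{(40\log\log k+2)/(100\log\log k)}\leq k^{0.45}$. So we write the count as $\pi(J_1)/\varphi(e_1e_2)+E(e_1e_2)$.

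\textbf{Step 3: error terms.} A given modulus $e$ arises from at most $2^{\omega(e)}\leq 2^{4m+2}\leq(\log k)^{30}$ pairs, and all moduli are below $k^{1/2-\delta}$. Bombieri--Vinogradov with $A$ large then gives a total error $\ll k(\log k)^{30-A}$, which is negligible.

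\textbf{Step 4: main term.} Set $g(q)=1/(q-1)$. The main term is $\pi(J_1)\asymp k/\log k$ times
\begin{align*}
\Sigma^+\Sigma^-+\Sigma^-\Sigma^+-\Sigma^+\Sigma^+ ,\qquad \Sigma^{\pm}=\sum_{e\mid P(\mathcal{Q}_1),\,\omega(e)\leq 2m(+1)}\mu(e)g(e),
\end{align*}
with the twist that coprimality of $e_1,e_2$ couples the two sums. I would handle this the standard way: view the pair $(e_1,e_2)$ as a single sifting problem with density $2/(q-1)$ per prime. Its untruncated value is $V=\prod_{q\in\mathcal{Q}_1}\bigl(1-\tfrac{2}{q-1}\bigr)\gg\prod_{5\leq q<k}\bigl(1-\tfrac{2}{q-1}\bigr)\gg(\log k)^{-2}$ by Mertens. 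The truncation errors are bounded via the Bonferroni tail by
\begin{align*}
\ll \frac{(\sum_q 2/(q-1))^{2m}}{(2m)!}\ll\Bigl(\frac{e(2\log\log k+O(1))}{20\log\log k}\Bigr)^{20\log\log k}\ll (\log k)^{-10}.
\end{align*}
This is far smaller than $V$, so the main term is $\gg (k/\log k)\cdot(\log k)^{-2}=k/(\log k)^3$, as claimed.

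The main obstacle is Step 4: making the vector-sieve combination of truncated sums genuinely close to $V^{}$, i.e.\ bounding the cross terms such as $(\Sigma^+-V_1)(\Sigma^+-V_2)$ together with the coupling from coprimality. I would follow the standard Brun fundamental-lemma computation, as in \cite[Chapter 6]{FI2010}, for each factor, using the crucial smallness $\sum_{q\in\mathcal{Q}_1}1/q\leq\log\log k+O(1)$ against the choice $m=\lfloor 10\log\log k\rfloor$.
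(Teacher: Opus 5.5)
There is a genuine gap in Steps 2--3. The bound $e_1e_2\le B^{4m+2}$ has no justification. The hypothesis controls only the \emph{number} of primes in $\mathcal{Q}_1$ (at most $100B$), not their \emph{size}. $\mathcal{Q}_1$ consists of the odd primes $q<k$ not dividing $n+i_0d^\ell$, and nothing prevents these from being of size $k^{1/3}$, $k^{1/2}$, or close to $k$. So the moduli $e_1e_2$ in your expansion can be as large as about $k^{4m+2}$. That is far beyond the range $k^{1/2-\delta}$ where Bombieri--Vinogradov applies, and for such moduli nothing supports approximating the prime count by $\pi(J_1)/\varphi(e_1e_2)$. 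Consequently neither the error analysis in Step 3 nor the main-term evaluation in Step 4 is justified as written.

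The missing idea is to use the smallness of $|\mathcal{Q}_1|$ differently: to discard the large sieving primes trivially. Choose a threshold $T_1$ with $T_1^{4m+2}\le k^{1/2-\delta}$ and $|\mathcal{Q}_1|/T_1=o((\log k)^{-3})$. The choice $T_1=B$ does not work, since $|\mathcal{Q}_1|/B$ may be as large as $100$; the paper takes $T_1=k^{1/(99\log\log k)}$. Write $\mathcal{U}_1,\mathcal{V}_1$ for the primes of $\mathcal{Q}_1$ below and above $T_1$. Then
\begin{align*}
F(n_1,\mathcal{Q}_1)F(n_2,\mathcal{Q}_1)\ge F(n_1,\mathcal{U}_1)F(n_2,\mathcal{U}_1)-\mathbf{1}\bigl(n_1\equiv a_q \text{ or } n_2\equiv a_q\ (\mathrm{mod}\ q) \text{ for some } q\in\mathcal{V}_1\bigr),
\end{align*}
and the number of $p\in J_1$ caught by the indicator is $\ll\sum_{q\in\mathcal{V}_1}(k/q+1)\ll k|\mathcal{Q}_1|/T_1\ll k(\log k)^{-10}$. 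Only after this reduction does your argument go through, now applied to $\mathcal{U}_1$. From there it matches the paper's proof: coprimality of $e_1,e_2$ from the two distinct forbidden classes, Bombieri--Vinogradov with multiplicity $2^{\omega(e)}\le(\log k)^{30}$, and the product $\prod_{q\in\mathcal{U}_1}\bigl(1-\frac{2}{q-1}\bigr)\gg(\log k)^{-2}$ using $q\ge 5$.
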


\begin{remark}
We note that the ``sufficiently large'' in Lemma \ref{lem:many primes when 3 divides n+id ell} is ineffective, due to our reliance on the Bombieri--Vinogradov theorem. One could obtain an effective result by a more careful treatment of exceptional Dirichlet characters, but we have opted not to do so for the sake of simplicity.
\end{remark}

\begin{proof}[Proof of Lemma \ref{lem:many primes when 3 divides n+id ell}]
Note that $|\mathcal{Q}_1| \leq 100 k^{1/(100\log \log k)}$. We treat ``small'' and ``large'' elements of $\mathcal{Q}_1$ differently. Let $T_1 = k^{1/(99\log \log k)}$, and write
\begin{align*}
\mathcal{U}_1 &= \{q < k : q \in \mathcal{Q}_1, q \leq T_1\}, \\
\mathcal{V}_1 &= \{q < k : q \in \mathcal{Q}_1, q > T_1\}.
\end{align*}
By trivial estimation, the number of $p \in J_1$ such that $i_0 + \varepsilon_1p \equiv a_q \pmod{q}$ or $i_0 + 2\varepsilon_1p \equiv a_q \pmod{q}$ for some $q \in \mathcal{V}_1$ is trivially
\begin{align*}
\ll k \frac{|\mathcal{V}_1|}{T_1} \ll \frac{k}{(\log k)^{10}}.
\end{align*}

We aim to apply sieve methods to
\begin{align*}
\sum_{p \in J_1} F(i_0 + \varepsilon_1 p,\mathcal{U}_1)F(i_0 + 2\varepsilon_1p,\mathcal{U}_1) &= \sum_{p \in J_1} F(i_0 + \varepsilon_1 p)F(i_0 + 2\varepsilon_1p),
\end{align*}
where we suppress the notational dependence on $\mathcal{U}_1$. By Lemma \ref{lem:vector sieve}, we have
\begin{align*}
\sum_{p \in J_1} F(i_0 + \varepsilon_1 p)F(i_0 + 2\varepsilon_1p) &\geq \frac{1}{\log k}\sum_{p \in J_1}(\log p) F^+(i_0 + \varepsilon_1 p)F^-(i_0 + 2\varepsilon_1p) \\
&+ \frac{1}{\log k}\sum_{p \in J_1} (\log p)F^+(i_0 + 2\varepsilon_1p)( F^-(i_0 + \varepsilon_1 p)- F^+(i_0 + \varepsilon_1 p)) \\
&= \mathcal{M} + \mathcal{E},
\end{align*}
say.

We first investigate $\mathcal{E}$. We observe that
\begin{align*}
F^-(i_0 + \varepsilon_1 p)- F^+(i_0 + \varepsilon_1 p) &= \sum_{\substack{e \mid P(\mathcal{U}_1) \\ i_0 + \varepsilon_1 p \equiv \gamma_e (e) \\ \omega(e) = 2m+1}} \mu(e).
\end{align*}
Using our expression for $F^+$ and the triangle inequality, we have
\begin{align*}
|\mathcal{E}| &\ll \frac{1}{\log k} \sum_{\substack{e \mid P(\mathcal{U}_1) \\ \omega(e) = 2m+1}} \mu^2(e) \sum_{\substack{f \mid P(\mathcal{U}_1) \\ \omega(f) \leq 2m}} \mu^2(f) \sum_{\substack{p \in J_1 \\ i_0 + \varepsilon_1 p \equiv \gamma_e (e) \\ i_0 + 2\varepsilon_1 p \equiv \gamma_f (f)}} 1.
\end{align*}

We have
\begin{align*}
f \leq T_1^{2m} \leq (k^{1/(99\log \log k)})^{20 \log \log k} \leq k^{9/40},
\end{align*}
and similarly $e \leq k^{1/4}$, say. We claim these inequalities imply $e$ and $f$ are coprime. If there is some prime $q$ that divides both $e$ and $f$, then $i_0 + \varepsilon_1 p \equiv a_q \equiv i_0 + 2\varepsilon_1 p \pmod{q}$, and therefore $q \mid p$. But $q \leq k^{1/4}$ and $p$ is a prime $\asymp k$, so this is a contradiction.

Since $e$ and $f$ are coprime, we may combine the congruence conditions on $p$ into a single congruence condition modulo $ef$. We then drop the condition that $p$ is prime and estimate trivially to get
\begin{align*}
    |\mathcal{E}| &\ll \frac{k}{\log k}\sum_{\substack{e \mid P(\mathcal{U}_1) \\ \omega(e) = 2m+1}} \frac{\mu^2(e)}{e} \sum_{\substack{f \mid P(\mathcal{U}_1) \\ \omega(f) \leq 2m}} \frac{\mu^2(f)}{f} +k^{1/2}\ll k\sum_{\substack{e \mid P(\mathcal{U}_1) \\ \omega(e) = 2m+1}} \frac{\mu^2(e)}{e} + k^{1/2}.
\end{align*}
By a trick of Erd\H{o}s (see \cite[p. 51]{Erd1935}), we have
\begin{align*}
\sum_{\substack{e \mid P(\mathcal{U}_1) \\ \omega(e) = 2m+1}} \frac{\mu^2(e)}{e} &\leq \frac{1}{(2m+1)!}\left(\sum_{p \leq T_1} \frac{1}{p} \right)^{2m+1} \leq \left(\frac{3 \log\log k}{2m+1} \right)^{2m+1} \ll (\log k)^{-20},
\end{align*}
say, recalling that $m = \lfloor 10 \log \log k\rfloor$. We deduce that $|\mathcal{E}| \ll k(\log k)^{-20}$.

We turn now to estimating
\begin{align*}
\mathcal{M} &= \frac{1}{\log k} \sum_{p \in J_1} (\log p) F^+(i_0 + \varepsilon_1 p)F^-(i_0 + 2\varepsilon_1p) \\
&= \frac{1}{\log k}\sum_{\substack{e \mid P(\mathcal{U}_1) \\ \omega(e) \leq 2m}} \mu(e) \sum_{\substack{f \mid P(\mathcal{U}_1) \\ \omega(f) \leq 2m+1}} \mu(f) \sum_{\substack{p \in J_1 \\ i_0 + \varepsilon_1 p \equiv \gamma_e (e) \\ i_0 + 2\varepsilon_1 p \equiv \gamma_f (f)}} \log p.
\end{align*}
As in the estimation of $\mathcal{E}$ above, we have $e \leq k^{9/40}, f \leq k^{1/4}$, and $e,f$ are coprime. Since $a_q \not \equiv i_0 \pmod{q}$ for any $q \in \mathcal{Q}_1$, we see that $p$ lies in primitive residue classes modulo $e$ and $f$. By trivial estimation, we can replace the sum over primes weighted by $\log p$ by a sum over integers weighted by the von Mangoldt function. It follows that
\begin{align*}
    \mathcal{M} = \frac{1}{\log k}\sum_{\substack{e \mid P(\mathcal{U}_1) \\ \omega(e) \leq 2m}} \mu(e) \sum_{\substack{f \mid P(\mathcal{U}_1) \\ \omega(f) \leq 2m+1 \\ \gcd(f,e)=1}} \mu(f) \sum_{\substack{n \in J_1 \\ n \equiv \gamma_{e,f} (ef)}} \Lambda(n) + O(k^{99/100}),
\end{align*}
where $\gamma_{e,f}$ is some primitive residue class modulo $ef$ arising from the Chinese remainder theorem. We approximate the sum over primes to obtain $\mathcal{M} = \mathcal{M}_0 + \mathcal{M}_1$, where
\begin{align*}
    \mathcal{M}_0 &= \frac{1}{\log k}\sum_{\substack{e \mid P(\mathcal{U}_1) \\ \omega(e) \leq 2m}} \frac{\mu(e)}{\varphi(e)} \sum_{\substack{f \mid P(\mathcal{U}_1) \\ \omega(f) \leq 2m+1 \\ \gcd(f,e)=1}} \frac{\mu(f)}{\varphi(f)} \sum_{\substack{n \in J_1 \\ \gcd(n,ef)=1}} \Lambda(n), \\
    \mathcal{M}_1 &=  \frac{1}{\log k}\sum_{\substack{e \mid P(\mathcal{U}_1) \\ \omega(e) \leq 2m}} \mu(e) \sum_{\substack{f \mid P(\mathcal{U}_1) \\ \omega(f) \leq 2m+1 \\ \gcd(f,e)=1}} \mu(f)\left(\sum_{\substack{n \in J_1 \\ n \equiv \gamma_{e,f} (ef)}} \Lambda(n) - \frac{1}{\varphi(ef)}\sum_{\substack{n \in J_1 \\ \gcd(n,ef)=1}} \Lambda(n) \right).
\end{align*}

The sum $\mathcal{M}_1$ contributes only to the error term. We apply the triangle inequality and take the worst error in approximating primes in arithmetic progression. Changing variables $w = ef$ then yields
\begin{align*}
    |\mathcal{M}_1| &\leq \sum_{w \leq k^{19/40}} \tau(w) \cdot \max_{\gcd(a,w)=1} \left|\sum_{\substack{n \in J_1 \\ n \equiv a(w)}} \Lambda(n) - \frac{1}{\varphi(w)}\sum_{\substack{n \in J_1 \\ \gcd(n,w)=1}} \Lambda(n) \right|,
\end{align*}
where $\tau(w)$ denotes the number of divisors of $w$.

The error induced by removing the condition $\gcd(n,w)=1$ is $O(k^{1/2})$, say. We then apply Cauchy--Schwarz and the trivial bound 
\begin{align*}
    \left|\sum_{\substack{n \in J_1 \\ n \equiv a(w)}} \Lambda(n) - \frac{1}{\varphi(w)}\sum_{\substack{n \in J_1 \\ \gcd(n,w)=1}} \Lambda(n) \right| \ll \frac{k\log k}{w}
\end{align*}
to remove the $\tau(w)$ factor and use the Bombieri--Vinogradov theorem \cite[Theorem 17.1]{IK2004} to obtain
\begin{align*}
\mathcal{M}_1 \ll \frac{k}{(\log k)^4}.
\end{align*}

The $\mathcal{M}_0$ term contributes the main term. We may remove the condition $\gcd(n,ef)=1$ at the cost of a negligible error. We now examine the double sum over $e$ and $f$. By arguments similar to those we used in estimating $\mathcal{E}$ above, we may remove the conditions $\omega(e) \leq 2m, \omega(f) \leq 2m+1$ at the cost of an error in $\mathcal{M}_0$ of size $O(k(\log k)^{-4})$. An Euler product computation then reveals
\begin{align*}
\mathcal{M}_0 = \frac{1}{\log k} \sum_{n \in J_1} \Lambda(n) \cdot \prod_{q \in \mathcal{U}_1} \left(1 - \frac{1}{q-1} \right)\left(1 - \frac{1}{q-2} \right) + O \left(\frac{k}{(\log k)^4} \right).
\end{align*}
The product over $q$ is trivially
\begin{align*}
\geq \prod_{3 < q \leq k} \left(1 - \frac{1}{q}\right)^2 \cdot \prod_{q > 3}\frac{(q-3)q^2}{(q-1)^3},
\end{align*}
where we have used the fact that $2,3 \not \in \mathcal{U}_1$. By Mertens' theorem, this is
\begin{align*}
\gg (\log k)^{-2},
\end{align*}
and it follows by the prime number theorem that
\begin{align*}
\mathcal{M}_0 &\gg \frac{k}{(\log k)^3}.
\end{align*}
Combining this lower bound with the various error estimates yields the result.
\end{proof}

We cannot use Lemma \ref{lem:many primes when 3 divides n+id ell} when $3 \nmid (n+i_0d^\ell)$. To see this, assume $3 \nmid (n+i_0d^\ell)$. Recall there is some $i_3$ such that $3 \mid (n+id^\ell)$ if and only if $i \equiv i_3 \pmod{3}$, so $i_0 \not \equiv i_3 \pmod{3}$. Thus, it is not possible to have many primes $p$ such that  $i_0 + \varepsilon_1 p \not \equiv i_3 \pmod{3}$ and $i_0 + 2\varepsilon_1 p \not \equiv i_3 \pmod{3}$, since one of the congruences will always be satisfied.

Accordingly, we need a version of Lemma \ref{lem:many primes when 3 divides n+id ell} that applies when $3 \nmid (n+i_0d^\ell)$. We give some more notation here in order to state such a result. 

We will have $0 \leq i_0 \leq k-1$ such that $n+i_0d^\ell$ is divisible by many primes $<k$. Let $J_2 = (\frac{k}{7},\frac{k}{6})$. If $i_0 \leq \frac{k}{2}$ and $p \in J_2$, then $i_0 + p,i_0+3p < k$. If $i_0 > \frac{k}{2}$, then $i_0-p,i_0-3p > 0$. We set $\varepsilon_2 = 1$ if $i_0 \leq \frac{k}{2}$, and $\varepsilon_2 = -1$ if $i_0 > \frac{k}{2}$. 

We let $\mathcal{Q}_2$ be the set of all odd primes $<k$ that do not divide $n+i_0d^\ell$, and for each $q \in \mathcal{Q}_2$ we have a corresponding residue class $a_q \pmod{q}$, where $q \mid (n+id^\ell)$ if and only if $i \equiv a_q \pmod{q}$. Note that if $q \in \mathcal{Q}_2$, then $i_0 \not \equiv a_q \pmod{q}$.

\begin{lemma}\label{lem:many primes when 3 does not divide n+id ell}
Let $0 \leq i_0 \leq k-1$ such that $n+i_0d^\ell$ is divisible by $>\pi(k)-100k^{1/(100\log \log k)}$ primes $<k$. Assume $3 \nmid (n+i_0d^\ell)$. If $k$ is sufficiently large, then
\begin{align*}
\sum_{p \in J_2} F(i_0 + \varepsilon_2 p,\mathcal{Q}_2)F(i_0 + 3\varepsilon_2p,\mathcal{Q}_2) \gg \frac{k}{(\log k)^3}.
\end{align*}
\end{lemma}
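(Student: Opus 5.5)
The plan is to repeat the argument of Lemma \ref{lem:many primes when 3 divides n+id ell} almost verbatim, with the pair $(p,2p)$ replaced by $(p,3p)$ and $J_1$ by $J_2$. The only genuinely new feature is the prime $3$, which now lies in $\mathcal{Q}_2$; the choice of the multiplier $3$ is exactly what defuses the obstruction described before the statement. Concretely, $i_0+3\varepsilon_2 p \equiv i_0 \not\equiv a_3 \pmod 3$ holds automatically, so the sieve condition at $3$ is vacuous for the second term. For the first term, the condition $i_0+\varepsilon_2p\not\equiv a_3 \pmod 3$ removes just one of the two primitive classes of $p$ modulo $3$, costing a factor of $1/2$. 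This is harmless.

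\textbf{Step 1: split $\mathcal{Q}_2$.} Since $|\mathcal{Q}_2|\le 100k^{1/(100\log\log k)}$, set $T_2=k^{1/(99\log\log k)}$ and split $\mathcal{Q}_2=\mathcal{U}_2\cup\mathcal{V}_2$ into primes $\le T_2$ and $>T_2$. The primes $p\in J_2$ caught by some $q\in\mathcal{V}_2$ number $\ll k|\mathcal{V}_2|/T_2\ll k(\log k)^{-10}$, so it suffices to sieve by $\mathcal{U}_2$ (which now contains $3$).

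\textbf{Step 2: vector sieve and error terms.} Apply Lemma \ref{lem:vector sieve} with $\log p/\log k$ weights to write the sum as $\mathcal{M}+\mathcal{E}$, using the same $m=\lfloor 10\log\log k\rfloor$. As before, $e,f\le k^{1/4}$. For coprimality, suppose $q\mid (e,f)$. Then $\varepsilon_2p\equiv 3\varepsilon_2 p\pmod q$, so $q\mid 2p$, which is impossible for odd $q\le k^{1/4}$. (For $q=3$ the condition $i_0+3\varepsilon_2p\equiv a_3$ has no solutions, so such $f$ contribute nothing.) The residue classes forced on $p$ are primitive: $p\equiv \varepsilon_2(a_q-i_0)$ and $p\equiv (3\varepsilon_2)^{-1}(a_q-i_0)\pmod q$, both nonzero because $a_q\not\equiv i_0$. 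The bound $|\mathcal{E}|\ll k(\log k)^{-20}$ then follows from Erd\H{o}s's trick exactly as before. In the main term, Bombieri--Vinogradov with moduli $w=ef\le k^{19/40}$ gives $\mathcal{M}_1\ll k(\log k)^{-4}$.

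\textbf{Step 3: the main term (the step needing care).} After removing the $\omega$-restrictions, the main term becomes $\frac{1}{\log k}\sum_{n\in J_2}\Lambda(n)$ times an Euler product. The local factor at $q=3$ is $1-\frac{1}{2}=\frac12$. For $q\ge5$ in $\mathcal{U}_2$, the two excluded classes are distinct, since $3\not\equiv1\pmod q$, so the factor is $1-\frac{2}{q-1}=\bigl(1-\frac1{q-1}\bigr)\bigl(1-\frac1{q-2}\bigr)$. This is positive for every $q\ge5$ (it equals $\frac12$ at $q=5$), which is precisely where the choice of multiplier $3$ rather than $2$ matters. The product is therefore $\gg \prod_{5\le q\le k}(1-1/q)^2\gg(\log k)^{-2}$ by Mertens' theorem. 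Since $J_2$ has length $\asymp k$, the prime number theorem gives $\mathcal{M}_0\gg k(\log k)^{-3}$, which dominates all of the error terms and yields the lemma.
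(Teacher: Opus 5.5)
Your proposal is correct and is essentially the paper's argument. The paper restricts $p$ to a single primitive class modulo $3$ and sieves only by primes $q>3$, which gives the factor $1/\varphi(3)=\frac12$; you instead keep $3$ in $\mathcal{U}_2$ and let the sieve produce the same local factor $\frac12$, which is equivalent. One small correction to your commentary: the multiplier $3$ (rather than $2$) matters only at $q=3$, where multiplier $2$ would give local factor $0$, while for $q\ge 5$ the factor $1-\frac{2}{q-1}$ is the same for either multiplier.
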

\begin{proof}
The proof is similar to that of Lemma \ref{lem:many primes when 3 divides n+id ell}, but we must make some small adjustments to account for the prime $3$.

Let $i_3$ be such that $3 \mid(n+id^\ell)$ if and only if $i \equiv i_3 \pmod{3}$. By assumption, we have $i_0 \not \equiv i_3 \pmod{3}$. We then want to ensure we can find many primes $p \in J_2$ such that
\begin{align*}
i_0 + \varepsilon_2 p \not \equiv &i_3 \pmod{3}, \\
i_0 + 3\varepsilon_2 p \not \equiv &i_3 \pmod{3}.
\end{align*}
The second condition always holds, since $i_0 + 3\varepsilon_2 p \equiv i_0 \not \equiv i_3 \pmod{3}$. We can ensure $i_0 + \varepsilon_2 p \not \equiv i_3 \pmod{3}$ holds by restricting $p$ to lie in a suitable residue class $a \pmod{3}$ (where $a$ depends on $i_0,i_3,\varepsilon_2$).

The rest of the argument is essentially the same as that in the proof of Lemma \ref{lem:many primes when 3 divides n+id ell}, but we work with the set $\mathcal{Q}_2' = \{q \nmid (n+i_0d^\ell): 3 < q < k\}$. The main term has the shape
\begin{align*}
    \sim \frac{1}{\varphi(e)\log k} \sum_{n \in J_2} \Lambda(n) \cdot \prod_{\substack{ q \in \mathcal{U}_2 \\ q > 3}} \left(1 - \frac{1}{q-1} \right)\left(1 - \frac{1}{q-2} \right) &\gg \frac{k}{(\log k)^3}. \qedhere
\end{align*}
\end{proof}

We need a result on solutions to certain generalized Fermat equations. Recall that a prime $q$ is a Fermat prime or a Mersenne prime if, for some positive integer $n$, we have $q=2^n + 1$ or $q=2^n-1$, respectively.

\begin{lemma}\label{lem:Kraus fermat lemma}
Let $\ell \geq 7$ be a prime. Let $q$ be an odd prime that is neither a Mersenne prime nor a Fermat prime. Let $a,b,c,u,v,w$ be nonzero integers with $au^\ell + bv^\ell + cw^\ell=0$ and $\textup{Rad}(abc) \mid 2q$. Assume $a,b,c$ are $\ell$-power-free, and that $u,v,w$ are pairwise coprime. Then
\begin{align*}
\log \ell \leq 2(q-1)\log(\sqrt{8(q+1)}+1),
\end{align*}
or $u,v,w = \pm 1$.
\end{lemma}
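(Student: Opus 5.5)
We follow the Frey curve strategy of Lemma \ref{lem:modularity to bound exponent ell}, now using the prime $q$ itself in place of the auxiliary prime $p$. First we normalize. Since $u,v,w$ are pairwise coprime and $\textup{Rad}(abc)\mid 2q$, any common prime divisor of two of the terms $au^\ell,bv^\ell,cw^\ell$ is $2$ or $q$. We remove such common factors. Exponents $\geq \ell$ are absorbed into $u,v,w$, and the relation $x+y+z=0$ forces the two smallest valuations to be equal. After this we may assume $au^\ell,bv^\ell,cw^\ell$ are pairwise coprime, $a,b,c$ are still $\ell$-power-free with $\textup{Rad}(abc)\mid 2q$, and the new $u,v,w$ are $\pm1$ only if the original ones were. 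Permuting and changing signs, we arrange $au^\ell\equiv -1 \pmod 4$ and $bv^\ell \equiv 0 \pmod 2$. We then form $E: Y^2=X(X-au^\ell)(X+bv^\ell)$.

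Assume some of $u,v,w$ is not $\pm1$. The easy subcase is that one of $u,v,w$ is divisible by an odd prime $p\neq \ell$. Note $p\nmid 2q$ may fail, so we treat $p=q$ separately. Even without this subcase, the argument below works directly. By Kraus \cite{Kra1997}, the conductor is $N=2^r\textup{Rad}_2(abcuvw)$ with $r\le 5$. By Mazur, $\overline{\rho}_{E,\ell}$ is irreducible once $\ell>163$; for $7\le \ell\le 163$ the claimed bound is trivial since $q\ge 11$, because $3,5,7$ are Fermat/Mersenne primes. Level lowering then gives a newform $f$ of level $N'\mid 2^5\textup{Rad}_2(abc)\mid 32q$.

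The key step is to use the prime $q$ as the ``test'' prime. If $q\mid N'$ we cannot use the congruence at $q$ directly, so we use $q'=2$ or small primes instead. I expect the actual route is to pick a small auxiliary prime $p_1$ with $p_1\nmid N'$ and compare $a_{p_1}(E)$ with $c_{p_1}$. Here $E$ has full $2$-torsion, so $a_{p_1}(E)$ lies in a finite set of even values bounded by $2\sqrt{p_1}$, and hence $\ell \mid N_{K/\mathbb Q}(c_{p_1}-a)$ or $\ell\mid N_{K/\mathbb Q}((p_1+1)^2-c_{p_1}^2)$. The form of the bound, $2(q-1)\log(\sqrt{8(q+1)}+1)$, suggests taking $p_1$ of size about $8(q+1)$. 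Alternatively it suggests the dimension estimate $\dim S_2^{new}(32q)\le (32q+1)/12$, or the sharper exact count $\le q-1$ via Martin's formula for level $2^5 q$, combined with Sturm-type bounds $|c_{p_1}\pm(p_1+1)|\le(\sqrt{p_1}+1)^2$ at a prime $p_1\le 8(q+1)$.

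We must guarantee there is a prime $p_1$ at which $E$ has multiplicative or good reduction and the relevant quantity is nonzero. This is where the hypothesis on $q$ enters. If $u,v,w$ are all $\pm1$ up to powers of $2$ and $q$, the equation reads $\pm 2^\alpha \pm q^\beta \pm 1=0$ (after normalization). Such solutions exist exactly when $q$ is Fermat or Mersenne, or in the trivial case. So excluding those primes ensures some odd prime $p_1\nmid 2q\ell$ divides $uvw$. We then apply the second bullet of Lemma \ref{lem:basic modularity} at $p_1$. To keep $p_1$ small we cannot control it directly, so instead we argue via the first bullet at the smallest prime $p_1\nmid 2q$, with $p_1\le 8(q+1)$ chosen to avoid degenerate cases.

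The main obstacle is bounding $[K:\mathbb Q]\le q-1$ for newforms of level dividing $32q$ and ensuring the norm is nonzero. Nonvanishing needs $a_{p_1}(E)\ne c_{p_1}$, which would otherwise force $f$ to correspond to an elliptic curve with rational $2$-torsion of conductor dividing $32q$. For such curves Setzer/Ogg-type classification shows $q$ is Fermat or Mersenne, or of the form handled trivially. I would try this first, citing Kraus's treatment.
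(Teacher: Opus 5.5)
\textbf{There is a genuine gap.} Your proposal runs one uniform Frey-curve argument. The paper instead splits according to $\textup{Rad}(abc)\in\{1,2,q,2q\}$ and quotes, respectively, Wiles; Ribet together with Darmon--Merel; Kraus's Corollaire 2; and Kraus's Corollaire 1.

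\textbf{The case $q\nmid abc$.} Here $N'\mid 32$. The only newform of level dividing $32$ is the one attached to $y^2=x^3-x$. That curve has full rational $2$-torsion, and it is exactly the Frey curve of the trivial solution $-1+2-1=0$: substitute $X\mapsto X-1$ in $Y^2=X(X+1)(X+2)$. Consequently:
\begin{itemize}
\item No comparison of traces at auxiliary primes can force a nonzero norm.
\item The hypothesis that $q$ is neither Fermat nor Mersenne plays no role. Your claim that a $2$-torsion curve of conductor dividing $32q$ forces $q$ to be Fermat or Mersenne already fails at conductor $32$.
\item If $q\mid uvw$, you can rescue the case by using $q$ as the test prime in the second bullet of Lemma~\ref{lem:basic modularity}, which gives $\ell\le(\sqrt q+1)^2$.
\item If $q\nmid uvw$, the equation is $x^\ell+y^\ell+2^rz^\ell=0$ with no $q$ anywhere. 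The conclusion is then precisely FLT ($r=0$), Ribet ($r\ge 2$) and Darmon--Merel ($r=1$). The last of these requires an analysis of rational points on modular curves attached to normalizers of Cartan subgroups, not a trace comparison. These are the deep inputs the paper cites.
\item Using the Fermat/Mersenne hypothesis to produce an odd prime $p_1\mid uvw$ does not help. That prime is uncontrolled (it could even be $\ell$), so the second bullet gives no bound in terms of $q$.
\end{itemize}

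\textbf{The case $q\mid abc$.} Your sketch points toward Kraus's method but does not carry it out, and the elimination step is misstated. A single coincidence $c_{p_1}=a_{p_1}(E)$ forces nothing. What is needed is the following chain. For $\ell$ beyond the bound, one gets $c_p=a_p(E)\equiv p+1\pmod 4$ for every $p\le 8(q+1)$ with $p\nmid 2q$; here $8(q+1)$ is the Sturm bound for $\Gamma_0(32q)$. Sturm's theorem then makes $f$ rational. One must then exclude curves of conductor $2^rq$ whose isogeny class carries full $2$-torsion or a rational point of order $4$. That exclusion reduces to the $\{2,q\}$-unit equation, which is where the Fermat/Mersenne hypothesis actually enters. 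Rational $2$-torsion alone is the wrong criterion: Setzer--Neumann curves of conductor $73=3^2+64$ have a rational $2$-torsion point. Since you end by deferring to Kraus, this part collapses to the paper's citation anyway.

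\textbf{The normalization claim.} Your claim that the new $u,v,w$ are $\pm1$ only if the old ones were is false. The identity $2^{\ell-1}\cdot 1^\ell+2^{\ell-1}\cdot 1^\ell+(-1)\cdot 2^\ell=0$ satisfies every hypothesis with $w=2$, and it normalizes to $1+1-2=0$. This is in fact a counterexample to the lemma as literally stated. The paper's ``without loss of generality $a,b,c$ pairwise coprime'' has the same defect. It is harmless in the application: if $d$ is even, all $a_i$ are odd and $p\nmid d$, so normalization never touches $d$; and an odd $d>1$ has a prime factor $\ge k$ that survives normalization. Still, the statement should assume that $au^\ell,bv^\ell,cw^\ell$ are pairwise coprime.
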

\begin{proof}
We may assume without loss of generality that $a,b,c$ are pairwise coprime. We proceed by checking a few cases.

If $\text{Rad}(abc)=1$, then $au^\ell + bv^\ell + cw^\ell=0$ is of the form $X^\ell+Y^\ell+Z^\ell=0$, and this famously
has no solutions \cite{Wiles1995}.

If $\text{Rad}(abc)=2$, then $au^\ell + bv^\ell + cw^\ell=0$ is of the form $X^\ell+Y^\ell+2^rZ^\ell=0$, for some integer $1 \leq r < \ell$. This has no solutions if $r \geq 2$ \cite{Rib1997}, and if $r=1$ then $u,v,w = \pm 1$ \cite{DM1997}.

If $\text{Rad}(abc)=q$, then $au^\ell + bv^\ell + cw^\ell=0$ is of the form $X^\ell + Y^\ell + q^r Z^\ell = 0$, with $1 \leq r < \ell$. By \cite[Corollaire 2]{Kra1997}, we then have
\begin{align*}
\log \ell \leq \frac{q+11}{6} \log \left(\sqrt{\frac{q+1}{2}}+1 \right).
\end{align*}

Lastly, if $\text{Rad}(abc)=2q$, then $au^\ell + bv^\ell + cw^\ell=0$ is of the form $X^\ell + Y^\ell + 2^r q^s Z^\ell=0$ or $X^\ell + 2^r Y^\ell + q^sZ^\ell=0$, for $1 \leq r,s < \ell$. It then follows from methods of Kraus (see \cite[Corollaire 1]{Kra1997}, \cite[p. 48]{DS2021}) that
\begin{align*}
\log \ell &\leq 2(q-1)\log(\sqrt{8(q+1)}+1). \qedhere
\end{align*}
\end{proof}

\subsection{The proof of Proposition \ref{prop:some term divisible by many primes < k}}

We are now in a position to prove Proposition \ref{prop:some term divisible by many primes < k}.

\begin{proof}[Proof of Proposition \ref{prop:some term divisible by many primes < k}]
Assume $n+i_0d^\ell$ is divisible by $>\pi(k)-100k^{1/(100\log \log k)}$ primes $<k$. We note that we may assume $d>1$, by work of Erd\H{o}s and Selfridge \cite{ES1975}. We write $\mathcal{Q}$ for the set of odd primes $<k$ such that $q \nmid (n+i_0d^\ell)$. We break into cases, depending on whether or not $3 \mid (n+i_0d^\ell)$.

Assume $3 \mid (n+i_0d^\ell)$. By Lemma \ref{lem:many primes when 3 divides n+id ell}, there exists $\varepsilon_1 =\pm 1$ and $\gg k(\log k)^{-3}$ primes $p \asymp k$ such that both $n+(i_0+\varepsilon_1 p)d^\ell$ and $n+(i_0+2\varepsilon_1 p)d^\ell$ are not divisible by any $q \in \mathcal{Q}$. Since the number of Mersenne or Fermat primes $<k$ is $\ll \log k$, we may choose some such odd prime $p$ that is neither a Mersenne prime nor a Fermat prime. 

Therefore, if $q<k$ is an odd prime that divides $n+(i_0+\varepsilon_1 p)d^\ell$ or $n+(i_0+2\varepsilon_1 p)d^\ell$, we must have that $q \mid (n+i_0d^\ell)$. It then follows that any such $q$ is equal to $p$, which implies $\text{Rad}(a_{i_0+\varepsilon_1p}),\text{Rad}(a_{i_0+2\varepsilon_1p}) \mid 2p$. By subtracting, we obtain
\begin{align*}
a_{i_0+2\varepsilon_1p}x_{i_0+2\varepsilon_1p}^\ell - a_{i_0+\varepsilon_1p}x_{i_0+\varepsilon_1p}^\ell = n+(i_0+2\varepsilon_1p)d^\ell - (n+(i_0+\varepsilon_1p)d^\ell) = \varepsilon_1p d^\ell,
\end{align*} 
which is a generalized Fermat equation to which we may apply Lemma \ref{lem:Kraus fermat lemma}. 

In order to apply the lemma, we must verify that the $x_i$ and $d$ are pairwise coprime. The fact that $\gcd(x_i,d)=1$ follows from Lemma \ref{lem:factorization lemma} and the fact that $\gcd(n,d)=1$. If we assume for contradiction that some prime $q$ divides both of the $x_i$, then we have $q^\ell \mid pd^\ell$. We have $q \nmid d$, by the above reasoning, so $q^\ell \mid p$. However, $\ell > k$ and $p < k$, so this is a contradiction. Since $d > 1$, we conclude by Lemma \ref{lem:Kraus fermat lemma} that
\begin{align*}
\log \ell \leq 2(p-1)\log(\sqrt{8(p+1)}+1) \ll k \log k.
\end{align*}

We argue similarly if $3 \nmid (n+i_0d^\ell)$, only we use Lemma \ref{lem:many primes when 3 does not divide n+id ell} instead of Lemma \ref{lem:many primes when 3 divides n+id ell}. We thus get some $\varepsilon_2 = \pm 1$ and prime $p \asymp k$ that is not a Mersenne or Fermat prime such that neither $n+(i_0+\varepsilon_2p)d^\ell$ nor $n+(i_0+3\varepsilon_2p)d^\ell$ are divisible by any odd primes in $\mathcal{Q}$ (note that $3 \in \mathcal{Q}$). By arguing as in the previous case, we have
\begin{align*}
a_{i_0+3\varepsilon_2p}x_{i_0+3\varepsilon_2p}^\ell - a_{i_0+\varepsilon_2p}x_{i_0+\varepsilon_2p}^\ell = 2\varepsilon_2 p d^\ell,
\end{align*}
where $\text{Rad}(a_{i_0+\varepsilon_2p}),\text{Rad}(a_{i_0+3\varepsilon_2p}) \mid 2p$. We then apply Lemma \ref{lem:Kraus fermat lemma} as before.
\end{proof}

\section*{Acknowledgments}
The first titled author's research is partially supported by the National Science Foundation (DUE-2500154).\\

The second author is partially supported by the National Science Foundation (DMS-2418328) and the Simons Foundation (MPS-TSM-00007959). \\

The authors used ChatGPT for literature searches, LaTeX assistance, proofreading, and checking previous drafts of the paper for possible mathematical errors. All the substantial mathematical ideas in the proof originated with the authors.

\bibliographystyle{plain}
\bibliography{refs}

\end{document}